\def\R{\mathbb R}
\def\H{\mathbb H}
\def\N{\mathbb N}
\def\Z{\mathbb Z}
\def\C{\mathbb C}
\def\vol{\mathrm{vol}}
\def\area{\mathrm{area}}
\newcommand{\floor}[1]{\left\lfloor #1 \right\rfloor}
\newtheorem{thm}{Theorem}[section]
\newtheorem{lemm}[thm]{Lemma}
\newtheorem{cor}[thm]{Corollary}
\newtheorem{prop}[thm]{Proposition}
\theoremstyle{remark}
\theoremstyle{definition}
\newtheorem{defi}[thm]{Definition}
\title{Counting minimal surfaces in negatively curved $3$-manifolds}
\author{Danny Calegari, Fernando C. Marques, and Andr\'e Neves}
\address{University of Chicago \\ Department of Mathematics \\ Chicago IL 60637\\ USA}
\email{aneves@math.uchicago.edu}
\address{Princeton University \\ Fine Hall \\ Princeton NJ 08544 \\ USA}
\email{coda@math.princeton.edu}
\address{University of Chicago \\ Department of Mathematics \\ Chicago IL 60637\\ USA}
\email{aneves@math.uchicago.edu}
\thanks{The second author is partly supported by NSF-DMS-1811840. The third author is partly supported by NSF  DMS-1710846 and a Simons Investigator Grant.}
\begin{document}
\maketitle

\begin{abstract}
We introduced an asymptotic quantity that counts area-minimizing surfaces in negatively curved closed $3$-manifolds and show that quantity to only be minimized, among all metrics of sectional curvature $\leq -1$, by the hyperbolic metric.

 \end{abstract}
 
\section{Introduction}
A classical and beautiful result in geometry says that if $(M,h_0)$ is a closed locally symmetric Riemannian manifold with {strictly negative curvature} and $h$ {another negatively curved} Riemannian metric on $M$ with the {same volume as $h_0$}, then the quantity
$$\delta(h):=\lim_{L\to\infty}\frac{\ln\#\{{\rm length}_h(\gamma)\leq L:\,\gamma\text{ closed geodesic in }(M,h)\}}{L}$$
satisfies $\delta(h)\geq \delta(h_0)$ and equality  implies that $h$ is isometric to $h_0$. 

This follows from combining a theorem of Margulis \cite{margulis} which identified the right hand side in the inequality above as the topological entropy for negatively curved metrics, a theorem of Manning \cite{manning} which says that the volume entropy and topological entropy coincide for negatively curved metrics, and a theorem of Besson--Courtois--Gallot \cite{BCG} which says that $g_0$ minimizes the volume entropy among all metric with the same volume.

Closed geodesics are a particular case of minimal surfaces and in the last years great progress has been made regarding existence of minimal hypersurfaces. For instance, for closed Riemannian manifold $M$ of dimension between $3$ and $7$, Irie and the last two authors \cite{irie-marques-neves} showed that, for generic metrics, the set of all closed embedded minimal hypersurfaces is dense in $M$; jointly with Song \cite{marques-neves-song} the last two authors showed that, for generic metrics, there is a sequence of closed embedded minimal hypersurfaces that becomes equidistributed; Song \cite{song} showed that for every Riemannian metric on $M$, there are always infinitely many distinct closed embedded minimal hypersurfaces; Zhou \cite{zhou} solved the Multiplicity One Conjecture made by the last two authors, which when combined with \cite{marques-neves-lower} implies that, for generic metrics, there is a closed embedded minimal hypersurface of Morse index $p$ for every $p\in\N$. 

The purpose of this paper is to study minimal surfaces in  a  closed orientable  $3$-manifold in the spirit of the entropy functional mentioned  at the beginning of the introduction.

 Before we state the main theorem we need to introduce some concepts. Throughout this paper, $M$ will denote a closed orientable $3$-manifold that admits an hyperbolic metric. A closed immersed genus $g$ surface $\Sigma\subset M$ is {\em essential} if the immersion $\iota:\Sigma \rightarrow M$ injects  $\pi_1(\Sigma)$ into $\pi_1(M)$. In this case, the group $G=\iota_{*}(\pi_1(\Sigma))$ is called a {\em surface subgroup of genus $g$} and surface subgroups of immersions homotopic to $\iota$ are in one to one correspondence with  conjugates of $G$ by an element of $\pi_1(M)$.

Let $S(M,g)$ denote the set of surfaces subgroups of genus at most $g$ of $\pi_1(M)$ modulo the equivalence relation of conjugacy. We abuse notation and see an element $\Pi\in S(M,g)$ as being either all subgroups of $\pi_1(M)$ that are conjugate to a fixed surface group of genus at most $g$ or the set of all essential immersions of surfaces  $\iota:\Sigma \rightarrow M$ for which $\iota_*(\pi_1(\Sigma))\in \Pi$. Khan and Markovic \cite{kahn-markovic2,kahn-markovic}  showed that  surface subgroups exist for all large genus and estimated the cardinality of $S(M,g)$. 

Consider a Riemannian metric $h$ on $M$ and denote the hyperbolic metric by $\bar h$. Given $\Pi\in S(M,g)$ we define
$$\text{area}_h(\Pi)=\inf\{\text{area}_h(\Sigma): \Sigma\in \Pi\},$$
where $\text{area}_h(\Sigma)$ denotes the area computed with respect to the metric $\iota^*h$. 

Given $\varepsilon\geq 0$ we define $S(M,g,\varepsilon)$ to be the conjugacy classes in $S(M,g)$ whose limit set is a $(1+\varepsilon)$-circle (see Definition  \ref{limit.set.definition}) and set
$$S_{\varepsilon}(M)=\cup_{g\in\N}S(M,g,\varepsilon).$$
We are interested in the following geometric quantity
\begin{equation}\label{asymp.area}
E(h)=\lim_{\varepsilon\to 0}\liminf_{L\to\infty}\frac{\ln \#\{\text{area}_h(\Pi)\leq 4\pi(L-1):\Pi\in S_{\varepsilon}(M)\}}{L\ln L}.
\end{equation}
Note that if $\varepsilon<\varepsilon'$, then $S_{\varepsilon}(M)\subset S_{\varepsilon'}(M)$ a{nd so the limit in the $\varepsilon$-variable is well defined.  
In this paper we show
\begin{thm}\label{main.thm} Given a Riemannian metric $h$ on $M$  {with volume entropy denoted by $E_{vol}(h)$ we have
$E(h)\leq 2E_{vol}(h)^2.$}

If the sectional curvature of $h$ is less or equal than $-1$ then
 $$E(h)\geq E(\bar h)=2 $$
 with equality if and only if $h$ is the hyperbolic metric.
\end{thm}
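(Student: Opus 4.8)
The plan is to establish the three assertions separately: the upper bound $E(h)\le 2E_{vol}(h)^2$, the evaluation $E(\bar h)=2$, and the rigidity. For the upper bound, I would count surface subgroups $\Pi\in S_\varepsilon(M)$ with $\mathrm{area}_h(\Pi)\le 4\pi(L-1)$ by relating them to their limit sets. A $(1+\varepsilon)$-circle limit set corresponds, via the nearly-Fuchsian/Kahn--Markovic construction, to a pleated (or minimal) immersed surface whose area is close to $4\pi(g-1)$ by Gauss--Bonnet; so the area condition effectively pins the genus to $g\le L$ up to $\varepsilon$-corrections. The heart is then to bound the number of genus-$\le g$ surface subgroups with a prescribed near-round limit set: such a subgroup is built from a collection of roughly $g$ ``pairs of pants'' or ``tripods'' whose feet are geodesics of length $\approx R$ in $(M,\bar h)$ (or $(M,h)$) for a large parameter $R$, and the combinatorial gluing data is controlled. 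Counting the available skew pants amounts to counting closed geodesics of bounded length together with orthogeodesic connections, which is governed by the volume entropy: the number of such building blocks in a ball of radius $R$ grows like $e^{E_{vol}(h)R}$, and assembling $g$ of them while matching boundaries gives of order $(e^{2E_{vol}(h)R})^{g}$ configurations with $R\sim \ln g$, i.e.\ $\exp(2E_{vol}(h)\,g\ln g)$ many; since $g\lesssim L$ and we divide by $L\ln L$ this yields $E(h)\le 2E_{vol}(h)^2$. I expect getting the constant $2$ exactly right — as opposed to $2E_{vol}(h)^2$ up to a universal multiplicative factor — to be the main obstacle, and it will require the sharp Kahn--Markovic--style count of $S(M,g)$ together with careful bookkeeping of how the near-round condition restricts the gluing.

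For $E(\bar h)=2$: in the hyperbolic metric the area infimum over $\Pi\in S(M,g)$ equals (at least asymptotically, by Gauss--Bonnet and the incompressibility of the minimal representative) $4\pi(g-1)=4\pi((g)-1)$, so the condition $\mathrm{area}_{\bar h}(\Pi)\le 4\pi(L-1)$ becomes genus $\le L$. Thus $E(\bar h)$ is exactly $\lim_{L\to\infty}\frac{\ln\#\{\Pi\in S_\varepsilon(M):\ \mathrm{genus}\le L\}}{L\ln L}$. One then invokes the asymptotic count of Kahn--Markovic for the number of surface subgroups of genus $g$ in a closed hyperbolic $3$-manifold, which is of the form $g^{2g(1+o(1))}=\exp(2g\ln g(1+o(1)))$; since the near-round restriction $S_\varepsilon$ is automatically satisfied (asymptotically) by the Kahn--Markovic surfaces, summing over $g\le L$ is dominated by $g\sim L$ and gives $E(\bar h)=2$. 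Since $E_{vol}(\bar h)=1$, this is consistent with the inequality $E(h)\le 2E_{vol}(h)^2$ being sharp at $\bar h$.

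For the lower bound $E(h)\ge 2$ when $\sec(h)\le -1$, and the rigidity: I would run the volume-entropy comparison. Under $\sec(h)\le -1$, the universal cover is CAT$(-1)$, so distances are at least hyperbolic, forcing $E_{vol}(h)\le E_{vol}(\bar h)=1$; but one wants a lower bound on $E(h)$, so instead I compare areas of the \emph{same} surface subgroup $\Pi$ in the two metrics. Because $\sec(h)\le -1$, a minimal (or even any) surface representing $\Pi$ has intrinsic curvature $\le -1$, hence by Gauss--Bonnet $\mathrm{area}_h(\Pi)\le 2\pi\,\chi^-= 4\pi(g-1)$ for a genus-$g$ subgroup; combined with the genus-versus-area matching this shows the set $\{\mathrm{area}_h(\Pi)\le 4\pi(L-1)\}$ \emph{contains} all genus-$\le L$ subgroups in $S_\varepsilon(M)$, and by the hyperbolic count above $E(h)\ge 2$. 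For rigidity: equality $E(h)=2$ forces, going back through the counting, that for a density-one family of large-genus $\Pi$ the bound $\mathrm{area}_h(\Pi)\ge 4\pi(g-1)$ is also saturated, i.e.\ the minimal representatives have area exactly $4\pi(g-1)$ and hence (by Gauss--Bonnet equality) are totally geodesic with induced curvature $\equiv -1$. A dense set of totally geodesic immersed surfaces whose tangent planes have curvature $-1$ forces $\sec(h)\equiv -1$ on a dense set of $2$-planes, hence everywhere, so $h$ is hyperbolic. The main obstacle here is making ``density-one family forces saturation everywhere'' rigorous — one must ensure that the equidistribution of Kahn--Markovic surfaces (and control of their limit sets being $(1+\varepsilon)$-circles) transfers the pointwise Gauss--Bonnet equality to a dense set of tangent planes in $M$, which is where the $S_\varepsilon$ restriction and the quantitative nearly-Fuchsian estimates do the real work.
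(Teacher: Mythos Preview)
Your proposal is on the right track for $E(\bar h)=2$ and for the inequality $E(h)\ge 2$ when $\sec(h)\le -1$: both follow from Gauss--Bonnet plus the Kahn--Markovic count, essentially as you describe. However there are two genuine gaps.

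\textbf{Upper bound.} Your approach via directly counting skew-pants configurations in the metric $h$ is the wrong route, and you already flag that obtaining the sharp constant $2E_{vol}(h)^2$ looks out of reach this way. The paper's argument is completely different and much shorter: for any essential $\Sigma\subset M$ one has $E_{vol}(h_\Sigma)\le E_{vol}(h)$ (intrinsic distance on the lifted disc dominates extrinsic), and the Besson--Courtois--Gallot inequality for surfaces gives $E_{vol}(h_\Sigma)^2\,\mathrm{area}_h(\Sigma)\ge 4\pi(g-1)$. Together these give $\mathrm{area}_h(\Pi)\ge E_{vol}(h)^{-2}4\pi(g-1)$, so $\mathrm{area}_h(\Pi)\le 4\pi(L-1)$ forces genus $\le E_{vol}(h)^2 L$. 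Feeding this into the Kahn--Markovic upper bound $s(M,g)\le (c_2 g)^{2g}$ gives $E(h)\le 2E_{vol}(h)^2$ immediately. No pants-counting in the $h$-metric is needed.

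\textbf{Rigidity.} Your outline ``equality forces totally geodesic minimal surfaces with induced curvature $-1$, and density of their tangent planes forces $\sec(h)\equiv -1$'' captures the shape of the argument but skips the steps that do the real work. First, equality in $E(h)=2$ only gives a \emph{sequence} $\Pi_i$ with $\mathrm{area}_h(\Sigma_i)/\mathrm{area}_{\bar h}(S_i)\to 1$, which via Gauss--Bonnet means the $L^1$-average of $|A|^2-(K_{12}+1)$ on $\Sigma_i$ tends to zero---not that any single $\Sigma_i$ is totally geodesic. Extracting a limiting totally geodesic $h$-disc $\Omega$ requires choosing the right conjugates of the surface groups so that the bad set stays away from a growing ball; this is nontrivial (the paper devotes Section~6 to it) and uses the equidistribution of the $S_i$ in $M$ together with convex-hull estimates relating the $h$-minimal lifts $\Omega_i$ to the $\bar h$-minimal lifts $D_i$. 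Second, and more seriously, even once you have such an $\Omega$, its tangent planes need not be dense in $Gr_2(M)$: the limiting hyperbolic plane $D$ could project to a \emph{closed} totally geodesic surface in $(M,\bar h)$. Ruling this out requires Ratner--Shah (and its refinement by McMullen--Mohammadi--Oh) to show that among the possible limits one can find a $D$ whose $\Gamma$-orbit is dense in the space of round circles. Finally, density of tangent planes of $D$ in $Gr_2(M,\bar h)$ must be transferred to density of tangent planes of $\Omega$ in $Gr_2(M,h)$; this uses Gromov's geodesic conjugacy between the two negatively curved metrics, and the last step from ``$\sec=-1$ on a dense set of planes along one geodesic'' to ``$\sec\equiv -1$'' uses the ergodicity of the frame flow (Brin--Gromov). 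None of these ingredients appears in your sketch, and the Ratner--Shah step in particular is not something that equidistribution of the $S_i$ in $M$ alone can replace.
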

As far as the authors know, this is the first result giving asymptotic rigidity  for the areas of minimal surfaces.

One obvious challenge is that  the results in \cite{BCG,manning,margulis} rely on the dynamical properties of the geodesic flow, which have no analogue for minimal surfaces. For this reason we restricted our asymptotic counting invariant to the  homotopy classes in $S_{\varepsilon}(M)$ so that  the dynamical properties of the geodesic flow can be of use. 

The fact that one can compute $E(\bar h)$ follows from \cite{kahn-markovic} and from the work of Uhlenbeck in \cite{uhlenbeck}. The inequality in Theorem \ref{main.thm} is a consequence of Gauss-Bonnet Theorem. The uniqueness statement in Theorem \ref{main.thm} will follow in two steps. First we combine minimal surface theory with the strong rigidity properties of totally geodesic discs proven independently  by Shah \cite{shah} and Ratner \cite{ratner}  to find, for every $v\in T_pM$, a totally geodesic  hyperbolic disc in $(M,g)$  containing $(p,v)$ in its tangent space. This will occupy most of the proof . Then we use the ergodicity of the frame flow due to Brin--Gromov \cite{brin-gromov} to show that the sectional curvature of every plane is $-1$.

We now briefly review some previous results related to our work. 

Shah \cite{shah} and Ratner \cite{ratner} showed that a totally geodesic  immersion of $\H^2$ in a compact hyperbolic  manifold has its image either dense or a closed surface. McMullen--Mohammadi--Oh recently generalized this result to the non-compact case  \cite{mmo}. 

McReynolds and Reid \cite{McR14} showed that arithmetic hyperbolic $3$-manifolds  which have the same (non-empty) set of totally geodesic surfaces are commensurable, i.e., covered by a common closed  $3$-manifold. It is not expected that
the areas of all totally geodesic surfaces will determine  the commensurability class of the arithmetic hyperbolic $3$-manifolds  \cite{LMcM17}. Jung \cite{jung} studied the asymptotic behavior  of the areas of totally geodesic surfaces for some arithmetic hyperbolic 3-manifolds.

Totally geodesic surfaces in hyperbolic manifolds have the attractive feature that they are preserved by the geodesic flow but their existence is not guaranteed. For instance,  there are closed hyperbolic $3$-manifolds which admit no totally geodesic immersed closed surface \cite[Chapter 5.3]{MacR} and even  finite volume hyperbolic
$3$-manifolds which admit no totally geodesic immersed finite area surfaces either \cite{calegari}. Recently it was shown that a closed hyperbolic $3$-manifolds having infinitely many totally geodesic surfaces \cite{bfms,margulis-mohammadi} is arithmetic.

Finally, it was shown in \cite{McR18}  that the commensurability class of a closed hyperbolic $3$-manifold is determined by their surface groups.
\medskip

{\bf Acknowledgements:} The authors would like to thank Ursula Hamenst\"adt for her comments and remarks.

\section{Notation and preliminaries}
We set up the basic notation and then discuss several results all well-known among the experts.

There is a discrete subgroup $\Gamma\subset \text{Isom}^{+}(\H^3)=\text{PSL}(2,\C)$ so that  $M=\H^3\setminus \Gamma$  is a closed orientable $3$-manifold and we fix an isomorphism between $\pi_1(M)$ and $\Gamma$. A Riemannian metric on $M$ is denoted by $h$ and the hyperbolic metric  is denoted by $\bar h$. Geometric quantities with respect to the metric $h$ will usually have the subscript $h$, while the same quantities will have no subscript if computed with respect the metric $\bar h$. For instance, the distance between two points $p,q$, the area of an immersed surface {$\phi:\Sigma\rightarrow M$, or  the Hausdorff distance between sets $A,B$ with respect to the metric $\phi^*\bar h$ and $\phi^*h$}, respectively,  are denoted by $d(p,q), d_h(p,q)$, $\text{area}(\Sigma)$, $\text{area}_h(\Sigma)$ or $d_H(A,B)$, $d_{H,h}(A,B)$. {Note that if $\Sigma$ is a $k$-cover of a surface $\tilde \Sigma$, then $\text{area}_h(\Sigma)=k\,\text{area}_h(\tilde \Sigma)$.}


Let $(B^3,h)$ denote the universal cover of $(M,h)$ and $S^2_{\infty}$ denote its sphere at infinity, which is defined as the set of all asymptote classes of geodesic rays, where two geodesic rays $\gamma_i:[0,+\infty)\rightarrow B^3$, $i=1,2$, define the same asymptote class, denoted by $\gamma_1(+\infty)$, if $\lim_{t\to \infty}d_h(\gamma_1(t),\gamma_2(t))<+\infty$. There is a natural topology on $\overline B^3 :=B^3\cup S^2_{\infty}$, the cone topology (see \cite{anderson82} for instance), for which $\bar B^3$ is homeomorphic to a $3$-ball. Given a set $\Omega\subset B^3$ we denote by $\overline \Omega$ its closure in $\overline B^3$ and $\partial_\infty\Omega$ stands for $\overline \Omega\cap S^2_{\infty}$.
We follow convention and denote $(B^3,\bar h)$ simply by $\H^3$. 


An essential immersion $\phi:\Sigma\rightarrow M$  must have genus $\geq 2$ (by Preissman Theorem) and thus $\phi$ admits a lift  $\bar \phi:D \rightarrow \H^3$ from a disc $D$ onto $\H^3$. To ease notation, we will often identity the immersions of $\Sigma$ or $D$ with its images in $M$ or $\H^3$, respectively. {This will create an ambiguity when $\Sigma$ is a $k$-cover of another surface $\tilde \Sigma$, but it will be clear from the context whether we are referring to the immersion (when we compute area for instance) or to the image set in $M$ (when we compute Hausdorff distances for instance)}. Given an essential surface $\Sigma\subset M$ with surface group $G<\Gamma$, there is a lift $D\subset \H^3$ that is invariant under $G$. Any other disc $D'\subset \H^3$ lifting $\Sigma$ is invariant under a group $G' <\Gamma$ that is conjugate to $G$. Necessarily we have (with an obvious abuse of notation) $D\setminus G=D'\setminus G'=\Sigma$.

The Grassmanian bundle of unoriented $2$-planes  in $M$ or $\H^3$  is denoted by $Gr_2(M)$ or $Gr_2(\H^3)$, respectively.  An immersed surface $\Sigma$  in $M$ (or its lift $D$ in $\H^3$) induces a natural immersion into $Gr_2(M)$ (or $Gr_2(\H^3)$) via the map $p\mapsto (p,T_p\Sigma)$ (or $p\mapsto (p,T_pD)$). 


\subsection{Fundamental domains and Cayley graphs:}\label{fundamental.domain} Given a subgroup $G <\text{PSL}(2,\C)$ acting properly discontinuous on $\H^3$, a {\em fundamental domain} $\Delta\subset \H^3$ for $\H^3\setminus G$ is  a closed region so that 
\begin{itemize}
\item[(i)] $\cup_{\phi\in G}\phi(\Delta)=\H^3$;
\item[(ii)] $\phi\in G$ and $\phi(\Delta)\cap \text{int }\Delta\neq \emptyset\implies \phi=\text{Id}$.
\end{itemize}
Because the manifold $M$ is compact, we can choose its fundamental domain   $\Delta$ to be  a convex polyhedron with finitely many totally geodesic faces. Such domains are called {\em Dirichlet  fundamental domain}. 
Each compact set $K\subset \H^3$ intersects only finitely many elements of $\{\phi(\Delta)\}_{\phi\in \Gamma}$.

Given a subgroup $G< \Gamma$, we  consider the set  $\Gamma\setminus G=\{\phi G: \phi\in \Gamma\}$ and pick a representative $\underline\phi$ in each coset $\phi G$. 

\begin{lemm}
 $\Delta_G=\cup_{\underline \phi\in \Gamma\setminus G}\underline\phi^{-1}(\Delta)$ is a fundamental domain for $\H^3\setminus G$. 
\end{lemm}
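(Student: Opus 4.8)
The plan is to verify directly the two defining conditions of a fundamental domain for $\Delta_G$, using that $\{\phi(\Delta)\}_{\phi\in\Gamma}$ is a locally finite tiling of $\H^3$ by convex polyhedra with pairwise disjoint interiors (the disjointness being exactly condition (ii) for $\Delta$). The bookkeeping we need is that each $\gamma\in\Gamma$ factors uniquely as $\gamma=\underline\phi\,g$ with $g\in G$ and $\underline\phi$ the chosen representative of the coset $\gamma G$; hence $\gamma^{-1}=g^{-1}\underline\phi^{-1}$ and the assignment $\gamma\mapsto(g,\underline\phi)$ identifies $\Gamma$ with $G\times\{\underline\phi\}$. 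Condition (i) for $\Delta_G$ is then pure computation:
$$\bigcup_{g\in G}g(\Delta_G)\;=\;\bigcup_{g\in G}\ \bigcup_{\underline\phi}\ g\,\underline\phi^{-1}(\Delta)\;=\;\bigcup_{\gamma\in\Gamma}\gamma^{-1}(\Delta)\;=\;\H^3,$$
the last step being condition (i) for $\Delta$; local finiteness of $\{\phi(\Delta)\}_{\phi\in\Gamma}$ also shows along the way that $\Delta_G$ is closed.

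For condition (ii), suppose $g\in G$ with $g(\Delta_G)\cap\mathrm{int}\,\Delta_G\neq\emptyset$; we must deduce $g=\mathrm{Id}$. The delicate point is that $\mathrm{int}\,\Delta_G$ may meet $\Gamma$-translates of $\partial\Delta$, so a witness point of the intersection need not lie in the interior of a single tile $\underline\phi^{-1}(\Delta)$, and we cannot invoke condition (ii) of $\Delta$ at it directly. To get around this I would first note that every tile, being a convex body, equals the closure of its interior, hence the closed set $\Delta_G$ satisfies $\Delta_G=\overline{\mathrm{int}\,\Delta_G}$ and so $g(\Delta_G)=\overline{g(\mathrm{int}\,\Delta_G)}$; since $\mathrm{int}\,\Delta_G$ is open, $\overline{g(\mathrm{int}\,\Delta_G)}\cap\mathrm{int}\,\Delta_G\neq\emptyset$ forces $g(\mathrm{int}\,\Delta_G)\cap\mathrm{int}\,\Delta_G\neq\emptyset$, and applying $g^{-1}$ shows the open set $V:=\mathrm{int}\,\Delta_G\cap g^{-1}(\mathrm{int}\,\Delta_G)$ is nonempty. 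The set $N:=\bigcup_{\gamma\in\Gamma}\gamma(\partial\Delta)$ is closed, $\Gamma$-invariant and nowhere dense, so I may choose $w\in V\setminus N$. Then $w$ lies in $\underline\phi_1^{-1}(\mathrm{int}\,\Delta)$ for a unique representative $\underline\phi_1$, and $g(w)$ lies in $\underline\phi_2^{-1}(\mathrm{int}\,\Delta)$ for a unique representative $\underline\phi_2$; since $\underline\phi_2\,g\,\underline\phi_1^{-1}(\Delta)\ni\underline\phi_2\,g(w)\in\mathrm{int}\,\Delta$, condition (ii) for $\Delta$ gives $\underline\phi_2\,g\,\underline\phi_1^{-1}=\mathrm{Id}$, i.e. $g=\underline\phi_2^{-1}\underline\phi_1\in G$. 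Thus $\underline\phi_1$ and $\underline\phi_2$ represent the same coset, hence $\underline\phi_1=\underline\phi_2$, and therefore $g=\mathrm{Id}$.

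I expect condition (ii) to be the only real issue: (i) is formal, while (ii) requires the perturbation of the witness point into the open dense set $\H^3\setminus N$ before the rigidity of $\Delta$ can be applied. The two facts that legitimize this perturbation are the local finiteness of $\{\phi(\Delta)\}_{\phi\in\Gamma}$ (which makes $N$ a genuinely nowhere dense closed set and $\Delta_G$ closed) and the elementary identity $\Delta_G=\overline{\mathrm{int}\,\Delta_G}$.
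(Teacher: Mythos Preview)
Your proof is correct and follows essentially the same approach as the paper: condition (i) is dispatched formally, and for condition (ii) both arguments reduce to condition (ii) for $\Delta$ by producing a point lying in the interior of a $\Gamma$-tile on each side. The only difference is tactical: the paper takes the witness $x\in\psi(\Delta_G)\cap\mathrm{int}\,\Delta_G$, uses an open neighborhood $U\subset\Delta_G$ of $x$ covered by finitely many tiles $\underline\phi^{-1}(\Delta)$, and observes that since $\psi(\underline\sigma^{-1}(\Delta))$ is the closure of its interior and meets $U$, so does $\psi(\underline\sigma^{-1}(\mathrm{int}\,\Delta))$; you instead first symmetrize via $\Delta_G=\overline{\mathrm{int}\,\Delta_G}$ and then perturb off the nowhere dense set $N=\bigcup_{\gamma}\gamma(\partial\Delta)$. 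Both maneuvers exploit local finiteness and the fact that each tile is the closure of its interior, so the arguments are minor variants of one another.
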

\begin{proof}
The reader can check that $\Delta_G$ is closed and that $\cup_{\phi\in G}\phi(\Delta_G)=\H^3$.  Suppose there is $\psi\in G$ and $x\in \psi(\Delta_G)\cap \text{int }\Delta_G$. Because $x\in  \text{int }\Delta_G$ we can find a finite set $A\subset \Gamma\setminus G$ and an open set $U$ so that $x\in U\subset \cup_{\underline \phi\in A}\underline \phi^{-1}(\Delta)$. Likewise we have $x\in \psi(\underline\sigma^{-1}(\Delta))$ for some $\underline \sigma\in \Gamma\setminus G$.  We must have
$$ \psi(\underline\sigma^{-1}(\text{int }\Delta))\cap \left( \cup_{\underline \phi\in A}\underline \phi^{-1}(\Delta)\right)\neq \emptyset$$
and thus $(\underline \sigma \psi^{-1})^{-1}=\underline \phi^{-1}$ for some $\underline \phi\in A$. Hence $ \underline \sigma=\underline \phi$ and $\psi=\text{Id}$.
\end{proof}

Fix $p\in \H^3$. Choosing $R$ large enough, the set $A=\{\phi\in\Gamma: d(p,\phi(p))\leq R\}$ generates $\Gamma$. The {\em Cayley graph} $\text{Gr}(\Gamma,A)$ of $\Gamma$ generated by $A$ is defined as having  vertices $\{\phi(p)\}_{\phi\in\Gamma}$ and two vertices $\psi(p),\phi(p)$ are connected by an edge if $\phi\psi^{-1}\in A$.  The graph $\text{Gr}(\Gamma,A)$ admits a distance function $\mathcal d$, where $\mathcal d(\phi,\psi)$ is the word length of $\phi\psi^{-1}$, and the norm of $\phi\in\Gamma$ is given by $|\phi|=\mathcal d(\phi,\text{Id})$. The Hausdorff distance with respect to $\mathcal d$ is denoted by $\mathcal d_H$.
We will need the following lemma
\begin{lemm}\label{cayley.balls} There is a constant $c>0$ depending on the Dirichlet domain $\Delta$ containing $p$ so that
\begin{equation*}
B_{n/c-c}(p)\subset\cup_{|\phi|\leq n}\phi(\Delta)\subset B_{nc+c}(p)\quad\text{for all }n\in\N.
\end{equation*}
\end{lemm}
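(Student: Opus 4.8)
This is an instance of the Milnor--\v{S}varc lemma for the cocompact action of $\Gamma$ on $\H^3$, and I would prove the two inclusions separately, fixing $c$ at the very end as the maximum of the finitely many constants that arise. The inclusion $\cup_{|\phi|\le n}\phi(\Delta)\subset B_{nc+c}(p)$ is the easy half: writing $\phi=a_1\cdots a_k$ with $a_i\in A$ and $k=|\phi|\le n$, left-invariance of $d$ under $\Gamma$ gives $d(p,\phi(p))\le\sum_{i=1}^{k}d(p,a_i(p))\le nR$, and since $p\in\Delta$ and $\mathrm{diam}\,\Delta<\infty$ (here compactness of $M$ enters), every point of $\phi(\Delta)$ lies within $nR+\mathrm{diam}\,\Delta$ of $p$; this forces only $c\ge\max\{R,\mathrm{diam}\,\Delta\}$.

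For the reverse inclusion $B_{n/c-c}(p)\subset\cup_{|\phi|\le n}\phi(\Delta)$ I would begin with two finiteness observations: the set $S=\{\phi\in\Gamma\setminus\{\mathrm{Id}\}:\phi(\Delta)\cap\Delta\ne\emptyset\}$ is finite because $\Delta$ is compact and meets only finitely many translates, and $C_0:=\max_{\phi\in S}|\phi|$ is finite because $A$ generates $\Gamma$. The key preliminary step is a \emph{uniform Lebesgue number}: there is $\rho>0$ such that $d(y,z)\le\rho$, $y\in\phi(\Delta)$, $z\in\psi(\Delta)$ already force $\phi^{-1}\psi\in S\cup\{\mathrm{Id}\}$, i.e.\ $\phi(\Delta)$ and $\psi(\Delta)$ meet. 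To produce such a $\rho$ one passes to the finite set $S'=\{\phi\in\Gamma:d(\phi(\Delta),\Delta)\le 1\}$ (finite since $\{w:d(w,\Delta)\le 1\}$ is compact) and lets $2\rho$ be the positive minimum of $d(\phi(\Delta),\Delta)$ over those $\phi\in S'$ with $\phi(\Delta)\cap\Delta=\emptyset$ (and $\rho=1$ if there is no such $\phi$); indeed, if $\phi(\Delta)$ and $\psi(\Delta)$ were disjoint then $\phi^{-1}\psi\in S'$ and $d(\phi^{-1}\psi(\Delta),\Delta)\le d(y,z)\le\rho$, contradicting the choice of $\rho$.

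Granting the Lebesgue number, the endgame is standard. Given $x\in\H^3$, choose $\phi\in\Gamma$ with $x\in\phi(\Delta)$, let $\sigma$ be the unit-speed geodesic from $p$ to $x$, and subdivide $[0,d(p,x)]$ by $0=t_0<t_1<\cdots<t_k=d(p,x)$ with $t_{j+1}-t_j\le\rho$ and $k\le d(p,x)/\rho+1$. Picking $\phi_j\in\Gamma$ with $\sigma(t_j)\in\phi_j(\Delta)$, $\phi_0=\mathrm{Id}$ and $\phi_k=\phi$, the Lebesgue property yields $\phi_j^{-1}\phi_{j+1}\in S\cup\{\mathrm{Id}\}$, hence $\phi=\prod_{j=0}^{k-1}(\phi_j^{-1}\phi_{j+1})$ has $|\phi|\le C_0 k\le C_0\big(d(p,x)/\rho+1\big)$. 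Taking $c\ge\max\{R,\mathrm{diam}\,\Delta,C_0/\rho,\rho\}$, the hypothesis $d(p,x)\le n/c-c$ then forces $|\phi|\le n$, so $x\in\cup_{|\phi|\le n}\phi(\Delta)$; since only these finitely many constants enter, one $c$ serves all $n\in\N$.

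The one step that needs genuine care rather than bookkeeping is the construction of the uniform Lebesgue number $\rho$: a priori infinitely many pairs of translates of $\Delta$ could approach one another arbitrarily closely, and the point of invoking proper discontinuity together with compactness of $\Delta$ is exactly to reduce this danger to the finite family $S'$. Everything else --- left-invariance of $d$, finiteness of $S$, and the fact that $A$ generates $\Gamma$ --- is routine.
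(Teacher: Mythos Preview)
Your proof is correct and is essentially the same approach as the paper's: both recognize this as an instance of the \v{S}varc--Milnor lemma for the cocompact $\Gamma$-action on $\H^3$. The only difference is one of presentation---the paper invokes \v{S}varc--Milnor as a black box (obtaining the quasi-isometry constants $K$, $K_1$, $n_1$ and leaving the final computation of $c$ to the reader), whereas you reprove the quasi-isometry inequality from scratch via the Lebesgue-number/geodesic-subdivision argument, which is exactly how \v{S}varc--Milnor is established in the first place.
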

\begin{proof}
The \v Svarc-Milnor Lemma says that 
the map $\Gamma\rightarrow \H^3$, $\phi\mapsto \phi(p)$ is a quasi-isometry  meaning there is a constant $K$ so that 
\begin{itemize}
\item[i)]$\H^3=\cup_{\phi\in\Gamma}B_K(\phi(p))$;
\item[ii)] for all $\psi,\phi\in\Gamma$, $$K^{-1}d(\phi(p),\psi(p))-K\leq \mathcal d(\phi,\psi)\leq Kd(\phi(p),\psi(p))+K$$
\end{itemize}
and there are constants $n_1\in\N$, $K_1>0$  so that 
$B_{K_1}(p)\subset \cup_{|\phi|\leq n_1}\phi(\Delta)$ and $\Delta\subset B_{K_1}(p)$.  The constant $c$ can be computed in terms of $n_1,K,K_1$ and we leave it to the reader.
\end{proof}
The \v Svarc-Milnor Lemma mentioned above also says that choice of a generating set or different base points would  give another Cayley graph that is quasi-isometric  to $\text{Gr}(\Gamma,A)$. We abuse notation and simply denote the Cayley graph by $\Gamma$.

\subsection{Morse Lemma: }
A curve $\gamma:\R\rightarrow \H^3$ is a {\em $(K,c)$ quasi-geodesic} if 
$$K^{-1}d(\gamma(t),\gamma(s))-c\leq |t-s|\leq Kd(\gamma(t),\gamma(s))+c\quad\text{for all }s,t\in\R.$$
A geodesic in $B^3$ with respect to the metric $h$ is a $(K,0)$ quasi-geodesic for some $K=K(h)$.

Morse's Lemma (see \cite[Theorem 2.3]{knieper} for instance) gives the existence of $r_0=r_0(K,c)$ such that for every  $(K,c)$ quasi-geodesic $\gamma$ in $\H^3$ there is a unique (up to reparametrization) geodesic $\sigma$ in $\H^3$ so that the Hausdorff distance between $\sigma(\R)$ and $\gamma(\R)$ is bounded by $r_0$.

\subsection{Limit sets and quasi-Fuchsian manifolds:}\label{limit.set.section}
 
 Given a discrete subgroup acting properly discontinuously $G< \text{PSL}(2,\C)$, the {\em limit set} $\Lambda(G)\subset S^2_{\infty}$ is defined as being the set of accumulation points in $S^2_{\infty}$ of the orbit $Gx$, where $x\in\H^3$. It is well known that the definition is independent of the point $x\in\H^3$ chosen and that the limit set is  closed. Elements $\phi\in \text{PSL}(2,\C)$ induce conformal maps of $S^2_{\infty}$ and one has that $\Lambda(\phi G\phi^{-1})=\phi(\Lambda(G))$. From this one deduces $\Lambda(G)$ is $G$-invariant.
 
 A $C^1$-map  $F:S^2_{\infty}\rightarrow S^2_{\infty}$ is said to be {\em quasiconformal} with dilation bounded by some $K\in[1,+\infty)$ if for for every point $p$, $DF_p$  sends circles into ellipses whose eccentricity (ratio between major axis and minor axis) is bounded by $K$. Conformal maps are quasi-conformal maps with $K=1$.
 
  If $\Lambda(G)$ is a geometric circle, then $G$ is called a {\em Fuchsian} group and if $\Lambda(G)$ is a Jordan curve, then  $G$ is called a {\em quasi-Fuchsian} group. In this case, it is known \cite[Proposition 8.7.2]{thurston} that  $\Lambda(G)$ is a $K$-quasicircle, meaning there is a quasiconformal map $F$ with dilation bounded by $K$ that  maps the equator to $\Lambda(G)$. 
  
 \begin{defi}\label{limit.set.definition}
  A discrete subgroup acting properly discontinuously $G< \text{PSL}(2,\C)$ is {\em $\varepsilon$-Fuchsian} if $\Lambda(G)$ is a $(1+\varepsilon)$-quasicircle.  This notion is invariant under conjugacy.
  \end{defi}


The normal bundle of an orientable surface $S\subset M$ is denoted by $T^{\bot}S\simeq S\times \R$. When the background metric is the hyperbolic metric, Uhlenbeck proved the following result in \cite[Theorem 3.3]{uhlenbeck}. 

\begin{thm}\label{uhlenbeck.thm}
Let  $S\subset M$ be an orientable minimal surface with principal curvatures $|\lambda(x)|\leq \lambda_0\leq 1$ for all $x\in S$. Then
\begin{itemize}
\item[(i)]The exponential map $\mbox{exp}:T^{\bot}S\rightarrow M$ is a covering map and thus $G:=\text{exp}_*(\pi_1(S))$ is a surface group;
\item[(ii)] $G$ is a quasi-Fuchsian group and $N:=\H^3\setminus G\simeq T^{\bot}S $ is complete hyperbolic manifold;
\item[(iii)] $S$ is embedded, area-minimizing, and  the only closed minimal surface in $N$;
\item[(iv)] For all $t>\tanh^{-1}(\lambda_0)$, the region $S\times[-t,t]\subset N$ is strictly convex and is boundary  has principal curvatures bounded from above  by 
$$\frac{\sinh t+\cosh t \lambda_0}{\cosh t+\sinh t \lambda_0}.$$
\end{itemize}
\end{thm}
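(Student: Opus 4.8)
The plan is to recover Uhlenbeck's conclusions from the comparison geometry of the normal exponential map of $S$, feeding the pointwise bound $|\lambda|\le\lambda_0$ into the Riccati equation of the equidistant surfaces. (I take $\lambda_0<1$, the only case in which (iv) is not vacuous.) First I would lift $S$ to a disc $\widetilde S$ immersed in $\H^3$, equivariant for the action of $\pi_1(S)$ on $\H^3$ through $\pi_1(M)=\Gamma$; by the Gauss equation the induced metric on $\widetilde S$ has curvature $-1+\lambda_1\lambda_2=-1-|\lambda|^2\le -1$, so $\widetilde S$ is $\mathrm{CAT}(-1)$, a fact used only at the end. Along each unit-speed normal geodesic $\gamma_x$, let $A(t)$ be the shape operator of the equidistant surface $\widetilde S_t$ with respect to the unit normal $\partial_t$ pointing away from $\widetilde S$. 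Since the curvature operator of $\H^3$ is $-\mathrm{Id}$ it commutes with parallel transport, so in a parallel frame diagonalising $A(0)$ the Riccati equation $A'+A^2-\mathrm{Id}=0$ decouples into the scalar equations $u'=1-u^2$; writing $\beta_i:=\tanh^{-1}\lambda_i(x)$ (with $\beta_2=-\beta_1$ by minimality and $|\beta_i|\le\tanh^{-1}\lambda_0$) this gives
\[
A(t)=\mathrm{diag}\big(\tanh(t+\beta_1),\tanh(t+\beta_2)\big),\qquad \mathrm{tr}\,A(t)=\tanh(t+\beta_1)+\tanh(t-\beta_1).
\]
From this several things fall out at once: $A(t)$ stays bounded for all $t$, so there are no focal points and $\exp$ is an immersion; the Jacobi fields it controls satisfy $|J_i(t)|=\cosh(t+\beta_i)/\cosh\beta_i\ge\sqrt{1-\lambda_0^2}>0$, so the pulled-back metric on $T^{\bot}\widetilde S\cong\widetilde S\times\R$ is $\exp^*\bar h=dt^2+g_t\ge(1-\lambda_0^2)(dt^2+g_0)$, a complete metric; and for $t>\tanh^{-1}\lambda_0$ every eigenvalue of $A(t)$ lies in the interval $\big(\tanh(t-\tanh^{-1}\lambda_0),\tanh(t+\tanh^{-1}\lambda_0)\big)$ with positive lower bound, which is exactly assertion (iv), the upper bound being $\tanh(t+\tanh^{-1}\lambda_0)=\frac{\sinh t+\cosh t\,\lambda_0}{\cosh t+\sinh t\,\lambda_0}$, once we know $\exp$ is a diffeomorphism.

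Next I would promote this to a diffeomorphism. The map $\exp\colon(T^{\bot}\widetilde S,\exp^*\bar h)\to\H^3$ is a local isometry with complete domain, hence a covering map, and since $\H^3$ is simply connected it is a diffeomorphism. Equivariance then forces $\pi_1(S)\to\Gamma$ to be injective (a deck transformation acting trivially on $\H^3$ acts trivially on $T^{\bot}\widetilde S$ through the bijection $\exp$, hence is the identity), with image $G:=\exp_*(\pi_1 S)$ a surface group; quotienting by $G$, the map $\exp\colon T^{\bot}S\to M$ is a covering map, $N:=\H^3\setminus G\cong T^{\bot}S$ is a complete hyperbolic manifold, and $S$ (the image of the zero section) is embedded. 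This proves (i) and the topological part of (ii). For the quasi-Fuchsian property I would argue: $G$ acts cocompactly on the $\mathrm{CAT}(-1)$ plane $\widetilde S$, so $\partial_\infty\widetilde S$ is topologically a circle; an intrinsic geodesic of $\widetilde S$ has geodesic curvature $\le\lambda_0<1$ in $\H^3$, hence is a $(K,c)$-quasi-geodesic, and by Morse's Lemma it lies within bounded Hausdorff distance of an actual $\H^3$-geodesic with a well-defined pair of endpoints on $S^2_\infty$. This produces a $G$-equivariant continuous injection $\partial_\infty\widetilde S\hookrightarrow S^2_\infty$ whose image, by cocompactness, is $\Lambda(G)$; so $\Lambda(G)$ is a Jordan curve and, by \cite[Proposition 8.7.2]{thurston}, a $K$-quasicircle, i.e.\ $G$ is quasi-Fuchsian. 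This finishes (ii).

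For (iii) I would use the foliation. By the diffeomorphism above, $N$ is foliated by the surfaces $S_t$, $t\in\R$, and $\mathrm{tr}\,A(t)=\tanh(t+\beta_1)+\tanh(t-\beta_1)$ vanishes at $t=0$ and is strictly increasing, so the mean curvature vector of $S_t$ points towards $S$ for every $t\ne0$ (strictly convex slab when $|t|>\tanh^{-1}\lambda_0$ by (iv)). Hence if $\Sigma\subset N$ is a closed minimal surface and $t$ denotes signed distance to $S$, then at a maximum point of $t|_\Sigma$ one has $\nabla_\Sigma t=0$, $\Sigma$ tangent to $S_{t_{\max}}$, and $0\ge\Delta_\Sigma t=\mathrm{tr}\,A(t_{\max})$, forcing $t_{\max}\le0$; symmetrically the minimum of $t|_\Sigma$ is $\ge0$, so $t\equiv0$ on $\Sigma$ and $\Sigma=S$, which is the uniqueness statement. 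Finally, since $G$ is quasi-Fuchsian the convex core of $N$ is compact, so the $\pi_1$-injective homotopy class of $S$ contains an area-minimizing representative by the standard existence theory; that representative is a smooth closed minimal surface, hence equals $S$ by the uniqueness just proved, so $S$ is area-minimizing in its homotopy class.

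The step I expect to be the main obstacle is the passage from the local to the global. The Riccati computation and all its pointwise corollaries are routine; the real content is (a) upgrading ``$\exp$ is an immersion without focal points'' to ``$\exp$ is a diffeomorphism'', which genuinely requires the completeness estimate $\exp^*\bar h\ge(1-\lambda_0^2)(dt^2+g_0)$ (without completeness one gets only a local diffeomorphism and none of (i)--(iii) follow), and (b) the quasi-Fuchsian conclusion, where one must turn the Morse-Lemma shadowing of the geodesics of the $\mathrm{CAT}(-1)$ surface $\widetilde S$ into a bona fide continuous parametrization of $\Lambda(G)$ by a circle. Everything else — embeddedness, uniqueness, and area-minimality — then comes cheaply from the foliation by the $S_t$ and the maximum principle.
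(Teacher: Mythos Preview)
Your argument is correct and is essentially Uhlenbeck's original approach: solve the Riccati equation along normal geodesics to get the explicit principal curvatures $\tanh(t\pm\beta)$ with $\beta=\tanh^{-1}\lambda(x)$, use the resulting lower bound on the Jacobi fields to see that $\exp^*\bar h$ is complete so that $\exp$ is a covering of $\H^3$ and hence a diffeomorphism, and then read off (i)--(iv) from the mean-convex foliation by the $S_t$.

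The paper, however, does not supply a proof of this theorem at all: it simply attributes (i)--(iii) to \cite[Theorem~3.3]{uhlenbeck} and adds a one-line remark that from Uhlenbeck's proof one extracts the principal-curvature formula
\[
\lambda^t_{\pm}(x)=\frac{\sinh t\pm\cosh t\,\lambda(x)}{\cosh t\pm\sinh t\,\lambda(x)},
\]
which immediately gives (iv). Your expression $\tanh(t\pm\tanh^{-1}\lambda)$ is exactly this formula after the addition law for $\tanh$, so you have reconstructed precisely the computation the paper invokes. What you supply beyond the paper is the surrounding argument (completeness of $\exp^*\bar h$, the covering-map step, the quasi-Fuchsian conclusion via Morse shadowing of intrinsic geodesics with curvature $\le\lambda_0<1$, and the maximum-principle uniqueness using the monotone mean curvature $\mathrm{tr}\,A(t)=\tanh(t+\beta)+\tanh(t-\beta)$). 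All of this is sound; your own diagnosis of the two delicate points---upgrading the local diffeomorphism to a global one via completeness, and turning the shadowing into an honest circle parametrization of $\Lambda(G)$---is accurate, and both are handled correctly.
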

The last property is not explicitly stated in \cite[Theorem 3.3]{uhlenbeck} but from its proof one sees that the surface $S\times\{t\}\subset N$ has principal curvatures
$$\lambda^t_{\pm}(x) =\frac{\sinh t\pm\cosh t \lambda(x)}{\cosh t\pm\sinh t \lambda(x)},$$
which readily implies property (iv).

\subsection{Totally geodesic planes}\label{subsection.geodesic.planes}

Consider $\mathcal C$ to the set of all geometric circles (of varying radii) in $S^2_{\infty}$. This set is non-compact and in one-to-one correspondence with the totally geodesic discs in $\H^3$ because given any $\gamma\in\mathcal C$ there is exactly one totally geodesic disc $C(\gamma)\subset \H^3$ such that $\partial_{\infty}C(\gamma):=S^2_{\infty}\cap \overline{C(\gamma)}$ is identical to $\gamma$.

Every $\phi\in \text{PSL}(2,\C)$  induces  a map from $\mathcal C$ to $\mathcal C$ (still denoted by $\phi$) such that $\phi(C(\gamma))=C(\phi(\gamma)).$ Hence, the group  $\Gamma$ acts naturally on $\mathcal C$. 

The following result was proven independently by Ratner \cite{ratner} and Shah \cite{shah}.
\begin{thm}\label{ratner-shah}
Given $\gamma\in \mathcal C$,  either  $C(\gamma)$ projects to a closed surface in $M$ or its natural immersion into $Gr_2(\H^3)$ projects to a dense set in $Gr_2(M)$. 
\end{thm}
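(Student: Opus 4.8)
The plan is to translate the statement into homogeneous dynamics on the frame bundle of $M$ and then invoke Ratner's theorem on closures of unipotent orbits. Write $G=\mathrm{PSL}(2,\C)=\mathrm{Isom}^+(\H^3)$, so that $G$ is the oriented orthonormal frame bundle of $\H^3$ and $\Gamma\backslash G$ is that of $M$. Since the stabilizer $Q$ of a pointed unoriented tangent $2$-plane is isomorphic to $O(2)$ and in particular compact, one has $Gr_2(\H^3)\cong G/Q$ and $Gr_2(M)\cong\Gamma\backslash G/Q$. Fix a totally geodesic plane $C_0=C(\gamma_0)$ through the base point and let $H\cong\mathrm{PSL}(2,\R)$ be the connected subgroup of $G$ acting on $C_0\cong\H^2$ as $\mathrm{Isom}^+(\H^2)$ (fixing a chosen normal direction); this $H$ is generated by its two horocyclic one-parameter subgroups, which are unipotent. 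Because $H$ acts transitively on $C_0$ and carries $T_qC_0$ to $T_{h(q)}C_0$, the image of the natural immersion $C_0\hookrightarrow Gr_2(\H^3)$ is exactly the orbit $HQ/Q$ of the base coset. Choosing $\phi\in G$ with $\phi(\gamma_0)=\gamma$ (recall $G$ acts transitively on $\mathcal C$), the image in $Gr_2(M)$ of the natural immersion of $C(\gamma)=\phi(C_0)$ is then precisely the image of the orbit $\Gamma\phi H\subset\Gamma\backslash G$ under $\Gamma\backslash G\to\Gamma\backslash G/Q$.

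With this dictionary I would apply Ratner's orbit closure theorem: since $\Gamma$ is a cocompact lattice in $G$ and $H$ is connected and generated by unipotent one-parameter subgroups, $\overline{\Gamma\phi H}=\Gamma\phi L$ for some closed connected subgroup $L$ with $H\subseteq L\subseteq G$, where $\Gamma\phi L$ is closed and supports a finite $L$-invariant measure (so $\phi^{-1}\Gamma\phi\cap L$ is a lattice in $L$). The key algebraic point is that necessarily $L\in\{H,G\}$. Since $L$ is connected it is determined by its Lie algebra, so it suffices to observe that $\mathfrak{sl}(2,\R)$ is a maximal subalgebra of the real Lie algebra $\mathfrak{sl}(2,\C)$: the quotient $\mathfrak{sl}(2,\C)/\mathfrak{sl}(2,\R)$, with its induced $\mathfrak{sl}(2,\R)$-action, is the $3$-dimensional irreducible adjoint representation of $\mathfrak{sl}(2,\R)$, hence has no nonzero proper invariant subspace, so there is no subalgebra strictly between $\mathfrak{sl}(2,\R)$ and $\mathfrak{sl}(2,\C)$.

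It then remains to examine the two cases. If $L=G$ then $\Gamma\phi H$ is dense in $\Gamma\backslash G$, so its image is dense in $Gr_2(M)$, which by the first paragraph is the image of the natural immersion of $C(\gamma)$; this is the second alternative. If $L=H$ then $\Gamma\phi H$ is a closed orbit in the compact space $\Gamma\backslash G$, hence compact, and $\phi^{-1}\Gamma\phi\cap H$ is a cocompact lattice in $H$; therefore the image of $C(\gamma)$ in $M$ is a compact totally geodesic immersed surface, indeed the quotient $C(\gamma)/(\Gamma\cap\phi H\phi^{-1})$ is a closed surface because $\Gamma$ is torsion-free, which is the first alternative. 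The two alternatives are mutually exclusive, since the Grassmann lift of a closed surface is a compact $2$-dimensional subset whereas $\dim Gr_2(M)=5$.

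The main obstacle is, of course, the appeal to Ratner's theorem: the classification of orbit closures for flows generated by unipotent one-parameter subgroups is precisely the deep result of Ratner and Shah \cite{ratner,shah} that is being quoted, and the remaining work is only the bookkeeping of the plane--orbit dictionary and the elementary representation theory of $\mathfrak{sl}(2,\R)$. Beyond that, the only points needing a little care are checking that density of $\Gamma\phi H$ in the frame bundle descends to density in $Gr_2(M)$, and that a closed $H$-orbit produces a genuinely closed surface in $M$ rather than merely a closed image in the Grassmannian; both are routine, using that $Q$ is compact and $\Gamma$ is cocompact and torsion-free.
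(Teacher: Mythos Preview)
The paper does not give a proof of this theorem; it is stated as a known result and attributed to Ratner \cite{ratner} and Shah \cite{shah}. Your proposal supplies exactly the standard deduction of this special case from Ratner's general orbit closure theorem, and the argument is correct: the dictionary identifying the Grassmann lift of $C(\gamma)$ with the projection of a single $H$-orbit in $\Gamma\backslash G$ is set up properly, and the crucial algebraic input---that $\mathfrak{sl}(2,\R)$ is a maximal proper Lie subalgebra of $\mathfrak{sl}(2,\C)_\R$ because the quotient is the irreducible adjoint module---forces the Ratner subgroup $L$ to be either $H$ or $G$. Your own caveat is the right one: the genuine content is Ratner's theorem itself, and what you have written is the (correct) bookkeeping layer on top of it, which is all one can reasonably ask for here.
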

Given $\gamma\in\mathcal C$, consider the orbit
$ \Gamma \gamma:=\{\phi(\gamma): \phi\in \Gamma\}\subset \mathcal C.$

Using the fact that  $\{\gamma_i\}_{i\in\N} \in \mathcal C$ converges to $\gamma\in \mathcal C$ if and only if $C(\gamma_i)$ converges to $C(\gamma)$ on compact sets of $\H^3$, we leave to the reader to check that  $\Gamma \gamma$
is dense in $\mathcal C$ if and only if the natural immersion of $C(\gamma)$ into $Gr_2(\H^3)$ projects to a dense set in $Gr_2(M)$.

 The next theorem was essentially proven Theorem 11.1 in \cite{mmo}. We provide the modifications that need to be made.

\begin{thm}\label{mmo-lemma} Consider $\mathcal L\subset\mathcal C$ a closed set that is $\Gamma$-invariant.

Suppose that no element in $\mathcal L$ has a dense $\Gamma-$orbit in $\mathcal C$. Then every $\gamma\in \mathcal L$ is isolated and has $C(\gamma)$ projecting to a closed surface in $M$.\end{thm}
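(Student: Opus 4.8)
The plan is to bootstrap from the Ratner--Shah dichotomy (Theorem \ref{ratner-shah}) to the statement about $\mathcal L$ by ruling out non-isolated points. Since $\mathcal L$ contains no element with dense $\Gamma$-orbit, the dichotomy in Theorem \ref{ratner-shah} (in the form recorded after it, via density in $Gr_2(\H^3)$) immediately tells us that every $\gamma\in\mathcal L$ has $C(\gamma)$ projecting to a \emph{closed} surface in $M$. So the only real content is that each such $\gamma$ is \emph{isolated} in $\mathcal C$ (equivalently in $\mathcal L$). This is exactly where the homogeneous-dynamics input from \cite{mmo} enters: the content of their Theorem 11.1 is that a closed $\Gamma$-invariant family of circles all of whose members are ``closed'' (give closed totally geodesic surfaces) cannot accumulate.

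\textbf{Step 1: Reformulate in terms of frame-bundle dynamics.} I would first translate the problem to the homogeneous space $\Gamma\backslash G$ with $G=\mathrm{PSL}(2,\C)$, where the set $\mathcal C$ of circles corresponds to the orbit space of the subgroup $H\cong\mathrm{PSL}(2,\R)$ (the stabilizer of a fixed circle) acting on $\Gamma\backslash G$: a circle $\gamma$ with $C(\gamma)$ projecting to a closed surface corresponds to a \emph{closed} $H$-orbit in $\Gamma\backslash G$, while dense $\Gamma$-orbit in $\mathcal C$ corresponds to a dense $H$-orbit. The set $\mathcal L$ then corresponds to an $H$-invariant closed subset $\mathcal L^*\subset\Gamma\backslash G$, and the hypothesis says $\mathcal L^*$ is a union of closed $H$-orbits none of which is dense.

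\textbf{Step 2: Invoke the rigidity/closing results of \cite{mmo}.} Here I would cite the structure theorem of McMullen--Mohammadi--Oh: any $H$-invariant closed subset of $\Gamma\backslash G$ (with $\Gamma$ geometrically finite, in particular cocompact here) is, in the relevant cases, either all of $\Gamma\backslash G$, or a finite union of closed $H$-orbits, or contains a dense $H$-orbit of a properly intermediate subgroup --- and in the last two cases one gets that the ``closed'' locus is discrete in the appropriate sense. Concretely: if some $\gamma_i\to\gamma$ in $\mathcal L$ with $\gamma_i\neq\gamma$, then the corresponding closed $H$-orbits $x_iH\to xH$; by the closing lemma / non-divergence estimates of \cite{mmo} a sequence of distinct closed $H$-orbits accumulating on a point must equidistribute toward a larger homogeneous set, forcing either a dense orbit (contradicting the hypothesis) or an intermediate closed $L$-orbit with $H<L<G$ containing infinitely many of the $x_iH$; but $H$ is a \emph{maximal} connected subgroup of $G=\mathrm{PSL}(2,\C)$ (the only connected subgroups between $\mathrm{PSL}(2,\R)$ and $\mathrm{PSL}(2,\C)$ up to conjugacy are $H$ itself and $G$), so $L=G$, again giving density. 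Either way we contradict the hypothesis, so no such accumulation occurs and every $\gamma\in\mathcal L$ is isolated.

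\textbf{The main obstacle} is Step 2: making the reduction to \cite{mmo} clean. Their Theorem 11.1 is stated for a slightly different setup (finite-volume, possibly non-compact $M$, and closed $H$-orbits rather than an abstract $\Gamma$-invariant family of circles), so the real work --- which the paper promises as ``the modifications that need to be made'' --- is (a) checking that a $\Gamma$-invariant closed $\mathcal L\subset\mathcal C$ pulls back to a genuinely $H$-invariant closed subset of $\Gamma\backslash G$ and that the two notions of ``dense orbit'' match under the correspondence set up in \S\ref{subsection.geodesic.planes}; (b) verifying that the hypothesis ``no dense $\Gamma$-orbit'' rules out the non-elementary intermediate-subgroup case using maximality of $\mathrm{PSL}(2,\R)$ in $\mathrm{PSL}(2,\C)$ together with the classification of closed invariant sets; and (c) confirming that the geometrically-finite (here cocompact) hypothesis needed to apply \cite{mmo} holds --- which it does since $M$ is closed. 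Once the dictionary is in place, isolatedness is exactly the ``finite union of closed orbits'' conclusion applied locally, and the closed-surface conclusion is immediate from Theorem \ref{ratner-shah}.
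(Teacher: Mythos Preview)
Your Step~1 (the closed-surface conclusion is immediate from Ratner--Shah) matches the paper exactly. Your Step~2, however, takes a genuinely different route. The paper does \emph{not} pass to $\Gamma\backslash G$ and invoke a Mozes--Shah style equidistribution together with maximality of $H=\mathrm{PSL}(2,\R)$ in $G=\mathrm{PSL}(2,\C)$. Instead, assuming an accumulation $\gamma_i\to\gamma$ in $\mathcal L$, it works directly on $S^2_\infty$: the stabilizer $\Gamma^\gamma$ is a convex-cocompact Fuchsian group acting on a disc $\Omega$ with $\partial\Omega=\gamma$, and Corollary~3.2 of \cite{mmo} shows that the closure of $\bigcup_i\Gamma^\gamma\gamma_i$ already contains \emph{every horocycle} of $(\Omega,h_\Omega)$, so $\mathcal H(\Omega)\subset\mathcal L$. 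A separate input, Theorem~4.1 of \cite{mmo}, provides a dense set $\Lambda_0\subset S^2_\infty$ with the property that any circle through a point of $\Lambda_0$ has dense $\Gamma$-orbit; since $\Lambda_0$ meets some horocycle in $\mathcal H(\Omega)$, $\mathcal L$ contains a circle with dense orbit --- contradiction.

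Your approach is valid in outline and arguably more conceptual (maximality of $H$ replaces the horocycle lemma), but it leans on ingredients you have not pinned down: (i) you must first rule out that infinitely many $\gamma_i$ lie in a single $\Gamma$-orbit, i.e.\ check that the $\Gamma$-orbit of a circle whose plane gives a closed surface is discrete in $\mathcal C$; (ii) you need the volumes of the resulting distinct closed $H$-orbits to tend to infinity (finiteness of closed totally geodesic surfaces of bounded area); and (iii) the equidistribution statement you invoke is really Mozes--Shah or Dani--Margulis, not anything in \cite{mmo}, which is about the infinite-covolume setting. The paper's argument sidesteps all three issues by appealing only to the two concrete statements from \cite{mmo} above; it is shorter and stays within the circle-geometry framework already set up in \S\ref{subsection.geodesic.planes}.
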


\begin{proof}
Every $\gamma\in \mathcal L$ must have $C(\gamma)$ projecting to a closed  surface in $M$  because otherwise Theorem \ref{ratner-shah} would say that $\Gamma\gamma$ is dense in $\mathcal C$.

 We argue by contradiction and suppose there is $\gamma_i\in\mathcal L$ converging to $\gamma$ in $\mathcal L$ as $i\to\infty$ with $\gamma_i\neq \gamma$. Set $$\Gamma^\gamma=\{\phi\in\Gamma: \phi(\gamma)=\gamma\}.$$
The action of $\Gamma^\gamma$ preserves $C(\gamma)$ and  $C(\gamma)\setminus \Gamma^\gamma$ corresponds to a closed surface because $C(\gamma)$  projects to a closed surface in $M$.

Choose a disc $\Omega\subset S^2_{\infty}$ so that $\partial \Omega=\gamma$.  Either $\Gamma^\gamma$ preserves $\Omega$ or it contains a normal subgroup of index $2$ that preserves $\Omega$. If the latter occurs, relabel $\Gamma^\gamma$ to be that subgroup. By swapping $\Omega$ with its complement in $S^2_{\infty}$ if necessary and after possibly passing to a subsequence of $\{\gamma_i\}_{i\in\N}$, we can assume that $\gamma_i\cap \Omega\neq \emptyset$ for all $i\in\N$.

The disc $D$ carries a natural hyperbolic metric $h_{\Omega}$ conformal to the round metric in $S^2_{\infty}$  and each map in $\Gamma^{\gamma}$ is an orientation preserving  isometry of $\Omega$ with respect to the metric $h_\Omega$.  Finally, $\Omega\setminus \Gamma^{\gamma}$ is isometric to $C(\gamma)\setminus \Gamma^{\gamma}$ and so the group $\Gamma^{\gamma}$ is a nonelementary, convex, cocompact Fuchsian group as defined in \cite[Section 3]{mmo}.  Hence we can apply  Corollary 3.2 of \cite{mmo}  which says that if we consider  the set  $\mathcal H(D)
$ of all  horocycles in $(\Omega, h_{\Omega})$, i.e., 
$$\mathcal H(\Omega)=\{\sigma\in\mathcal C: \sigma\subset \overline \Omega, \sigma\cap \partial \Omega\neq \emptyset\},$$
then  the closure of $\bigcup \Gamma^\gamma \gamma_{i}$, and hence $\mathcal L$, contains $\mathcal H(\Omega)$.

From Theorem 4.1\cite{mmo} there exists a dense set $\Lambda_0\subset S^2_{\infty}$ such that if $\sigma\in \mathcal C$ intersects $\Lambda_0$, then $\Gamma \sigma$ is dense in $\mathcal C$. Necessarily, $\Lambda_0$ must intersect some element of $\mathcal H(\Omega)$ and so there is $\sigma\in\mathcal L$ for which $\Gamma\sigma$ is dense in $\mathcal C$. Thus $C(\sigma)$ does not project to a closed surface in $M$, which is a contradiction.

\end{proof}

\subsection{Frame flow}

We denote the  bundle of oriented orthonormal frames of $M$ with respect to $\bar h$ or $h$  by $\mathcal F(M)$ and $\mathcal F(M)(h)$ respectively.

The {\em frame flow} $F_t:\mathcal F(M)(h)\rightarrow \mathcal F(M)(h)$ is defined in the following way: given an oriented frame $(e_1,e_2,e_3)$ for $T_p M$, 
$$F_t(p,(e_1,e_2,e_3))=(\gamma(t),(\gamma'(t), e_2(t),e_3(t)))$$
where $\gamma(t)=\exp_p(te_1)$, and $e_2(t),e_3(t)$ denote the parallel transport of $e_2,e_3$ along $\gamma$. An important result which we will use, due to Brin and Gromov \cite{brin-gromov}, says that when $(M,h)$ is negatively curved  the frame flow is ergodic and in particular has a dense orbit in $\mathcal F(M)(h)$.

\section{Convex hulls} In this section we assume  that  $(M,h)$ has sectional curvature less than or equal to $-1$.

Given a closed set $\Lambda\subset S^2_{\infty}$, its {\em convex hull} $C_h(\Lambda)\subset \bar B^3$ denotes the smallest geodesically closed set of  $\bar B^3$ (with respect to the metric $h$) that contains $\Lambda$.  

The goal of this section is to prove the following result.

\begin{thm}\label{convex.hull.main.thm}
Let  $S\subset M$ be a minimal surface (with respect to $\bar h$) with principal curvatures $|\lambda(x)|\leq \lambda_0< 1$ for all $x\in S$ and $\Sigma\subset M$ a minimal surface with respect to $h$ in the homotopy class of $S$. Then, denoting by $D, \Omega\subset \H^3$ the lifts  of $S$ and $\Sigma$, respectively, that are invariant by the same surface group  we have
$$d_H(D,\Omega)\leq R$$
for some constant $R=R(h,\lambda_0)$.
\end{thm}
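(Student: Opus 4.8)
The plan is to bound the Hausdorff distance $d_H$ on both sides: to show that $\Omega$ lies in a (hyperbolic) $R$--neighbourhood of $D$, and symmetrically that $D$ lies in an $R$--neighbourhood of $\Omega$, for one constant $R=R(h,\lambda_0)$. Write $G<\Gamma$ for the common surface group and $\Lambda:=\Lambda(G)\subset S^2_\infty$ for its limit set. Since $G$ is quasi--Fuchsian, Theorem \ref{uhlenbeck.thm}(ii) gives $\partial_\infty D=\Lambda$, and since $\Omega/G=\Sigma$ is compact the lift $\Omega$ lies at finite (a priori non--uniform) Hausdorff distance from an orbit $Gx_0$, so $\partial_\infty\Omega=\Lambda$ as well. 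Fix $t=t(\lambda_0)>\arctanh\lambda_0$ and let $W:=\exp^{\bot}(D\times[-t,t])\subset\H^3$ be the lift of the slab $S\times[-t,t]\subset N$. By Theorem \ref{uhlenbeck.thm}(i),(iv) the normal exponential map $\exp^{\bot}\colon D\times\R\to\H^3$ is a diffeomorphism and $W$ is a closed, strictly $\bar h$--convex set; since $W$ is contained in the $t$--neighbourhood of $D$ we get $\partial_\infty W=\Lambda$, and since $W$ is $\bar h$--convex it contains the hyperbolic convex hull $C(\Lambda)$. In particular $d_H(D,C(\Lambda))\le t(\lambda_0)$, so it suffices to show that $\Omega$ is at bounded Hausdorff distance from $C(\Lambda)$, and that $D$ lies in a bounded neighbourhood of $\Omega$.

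For the first point I would first show $\Omega\subset C_h(\Lambda)$ by the maximum principle. The hull $C_h(\Lambda)$ is the intersection of the closed $h$--half--spaces $\overline H$ whose closure contains $\Lambda$; if $\Omega$ left such a half--space then, with $P=\partial H$, the function $d_h(\cdot,P)$---which is $h$--convex because $h$ has nonpositive curvature---would be subharmonic along the $h$--minimal surface $\Omega$, while the part of $\Omega$ lying outside $\overline H$ has closure disjoint from $S^2_\infty$ (its ideal points would lie in $\Lambda\cap\partial_\infty(B^3\setminus\overline H)=\emptyset$) and hence is relatively compact in $B^3$; this forces an interior maximum of $d_h(\cdot,P)$, contradicting the strong maximum principle since $\sec_h<0$ prevents $\Omega$ from being locally an equidistant hypersurface from $P$. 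Next, since $h$ has sectional curvature $\le-1$ the manifold $(B^3,h)$ is Gromov--hyperbolic and the identity map to $\H^3$ is a quasi--isometry, so by the Morse Lemma every $h$--geodesic lies within a bounded $\bar h$--Hausdorff distance $r_0(h)$ of the $\bar h$--geodesic with the same endpoints at infinity. Combining this with the standard fact that, in a Gromov--hyperbolic Hadamard manifold, the convex hull of a closed subset of the sphere at infinity lies at bounded Hausdorff distance from the union of the geodesics joining pairs of its points, I would deduce $d_H(C_h(\Lambda),C(\Lambda))\le R_0(h)$ with $R_0$ depending only on $h$. Together with $\Omega\subset C_h(\Lambda)$ and $d_H(D,C(\Lambda))\le t(\lambda_0)$ this places $\Omega$ in the $R_1$--neighbourhood of $D$, where $R_1:=R_0(h)+t(\lambda_0)$.

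For the reverse inclusion I would work in $N=T^{\bot}S\cong S\times\R$, where $S$ is the zero section and $\exp^{\bot}$ is a diffeomorphism (Theorem \ref{uhlenbeck.thm}(i),(ii)). Given $p_0\in S$, let $\gamma\colon\R\to N$ be the $\bar h$--geodesic normal to $S$ at $p_0$, i.e.\ the fibre of $T^{\bot}S$ over $p_0$. Since $\exp^{\bot}$ is a diffeomorphism whose zero section projects onto $S$, any minimising $\bar h$--segment from $\gamma(s)$ to $S$ lifts to a normal segment to $D$ and so must lie on the fibre over $p_0$; hence $\gamma$ meets $S$ only at $p_0$ and $d(\gamma(s),S)=|s|$, so $\gamma$ is a proper line running between the two ends of $N$. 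As $\Sigma$ is homotopic to $S$ in $N$ (lift the given homotopy through the cover $N\to M$) through a homotopy with compact image, the algebraic intersection number of $\gamma$ with $\Sigma$ equals that with $S$, namely $\pm1$; hence $\gamma$ meets $\Sigma$ at a point $q=\gamma(s_q)$. By the inclusion just proved, $d(q,S)\le R_1$, so $|s_q|\le R_1$ and $d(p_0,\Sigma)\le d(p_0,q)\le|s_q|\le R_1$. Lifting both inclusions back to $\H^3$ yields $d_H(D,\Omega)\le R_1=:R(h,\lambda_0)$.

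I expect the main difficulty---and the only place where $\sec_h\le-1$ is genuinely needed---to be the uniform comparison $d_H(C_h(\Lambda),C(\Lambda))\le R_0(h)$: one must bound how far the $h$--convex hull of $\Lambda$ can stray from the hyperbolic one by a constant that is independent of the surface group $G$, whose geometry (the diameter of a fundamental domain, the area of $S$) is not controlled by $\lambda_0$ as the genus grows. This is exactly where the Gromov--hyperbolicity of $(B^3,h)$, the Morse Lemma, and the coarse description of convex hulls in hyperbolic spaces come in; by comparison the barrier/maximum--principle step and the intersection--number step are soft.
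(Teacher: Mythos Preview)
Your strategy is the paper's: bound $d_H(D,C_{\bar h}(\Lambda))$ by $t(\lambda_0)$ via Uhlenbeck's convex slab; trap $\Omega$ inside $C_h(\Lambda)$ by a maximum--principle argument; compare $C_h(\Lambda)$ with $C_{\bar h}(\Lambda)$ uniformly in $G$ via Gromov--hyperbolicity and the Morse Lemma (this is exactly Proposition~\ref{cor.convex.hull}, which quotes Bowditch for the ``coarse convex hull'' step you call standard); and then intersect a normal $\bar h$--geodesic with $\Omega$ for the reverse inclusion. Your intersection--number argument in $N$ is equivalent to the paper's separation argument in $\overline{B^3}$, and your claim $d(\gamma(s),S)=|s|$ is correct for the reason you give.

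The one step that fails as written is the containment $\Omega\subset C_h(\Lambda)$. You present $C_h(\Lambda)$ as an intersection of closed $h$--half--spaces $\overline H$ and use that $d_h(\cdot,P)$, $P=\partial H$, is convex. In a Hadamard manifold of \emph{variable} negative curvature there need not exist \emph{any} totally geodesic hypersurface, so such half--spaces may simply not exist; and for a non--totally--geodesic $P$ the function $d_h(\cdot,P)$ is not convex, so the barrier breaks. The repair is immediate and is what the paper does (Proposition~\ref{convex.core}): drop the half--spaces and apply the maximum principle directly to the convex function $\tilde d=d_h(\cdot,C_h(\Lambda))$, carried out in the quotient $N=\H^3/G$ so that the minimal surface is the \emph{closed} surface $\Sigma$ and no properness argument at $S^2_\infty$ is needed. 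Because $\tilde d$ is only continuous, the paper invokes Greene--Wu smoothing to make the subharmonicity on $\Sigma$ rigorous before concluding $\tilde d|_\Sigma\equiv 0$.
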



Bangert and Lang proved similar results to the theorem above (see \cite{bangert-lang} and references therein) under the conditions that $D$ and $\Omega$ are quasi-minimizing. While that will be true for $D$, it is not necessarily true for $\Omega$ and so the result cannot be straightforwardly applied. It is conceivable that their proof could be extended to our setting but we chose a different argument.

 Given $p\in B^3$, the cone over $\Lambda$ centered at $p$  with respect to the metric $h$ is given by
$$\text{Co}_p(\Lambda):=\text{clo} \{\gamma(t): \gamma \mbox{ a  geodesic with }\gamma(0)=p, \gamma(\infty)\in\Lambda, 0\leq t<\infty\},$$
where the closure is taken with respect to the cone topology. One has $\text{Co}_p(\Lambda)\cap S^2_{\infty}=\Lambda$.

The space  $(B^3,h)$  has sectional curvature less than or equal to $-1$ and is thus $\bar \delta$-hyperbolic for some universal constant $\bar\delta$, meaning that a side in any geodesic triangle (with vertices possibly in $S^2_{\infty}$) is contained in the  $\bar \delta$-neighborhood of the union of the other two sides.  Thus if $p, q\in B^3$, $x\in S^2_{\infty}$, and $\gamma, \sigma$ denote   geodesic rays (with respect to $h$) staring at $p, q$ respectively with $\gamma(\infty)=\sigma(\infty)=x$ then, with $l$ denoting the geodesic connecting $p$ to $q$, we have that $\gamma$ is contained in the $\bar \delta$-neighborhood of the union of $\sigma$ and $l$. Therefore
\begin{equation}\label{cone.distance}
d_{H,h}(\text{Co}_p(\Lambda), \text{Co}_q(\Lambda))\leq \bar \delta+d_h(p,q).
\end{equation}
Likewise, $\text{Co}_p(\Lambda)$ is $\bar\delta$-quasiconvex, meaning that given any $x,y$ in  $\text{Co}_p(\Lambda)$, the geodesic connecting $x$ to $y$ is contained in a $\bar\delta$-neighborhood of $\text{Co}_p(\Lambda)$. 

\begin{prop}\label{cor.convex.hull} There is $R=R(h)$ so that given a closed set $\Lambda\subset S^2_{\infty}$ then  
$C_h(\Lambda)\cap S^2_{\infty}=\Lambda$ and
$$d_H(C_h(\Lambda), C_{\bar h}(\Lambda))\leq R.$$
\end{prop}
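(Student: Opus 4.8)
\textbf{Proof plan for Proposition \ref{cor.convex.hull}.}

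The plan is to compare both convex hulls with the cone $\text{Co}_p(\Lambda)$ over $\Lambda$ from a fixed basepoint $p\in B^3$, using the $\bar\delta$-hyperbolicity and $\bar\delta$-quasiconvexity of $(B^3,h)$ recorded above, together with the same structural facts for $\H^3$. The first observation is that $C_h(\Lambda)$ is sandwiched between $\text{Co}_p(\Lambda)$ and a bounded neighborhood of it. Indeed, $\text{Co}_p(\Lambda)$ contains $\Lambda$, so the smallest geodesically closed set containing $\Lambda$ is contained in the smallest geodesically closed set containing $\text{Co}_p(\Lambda)$; and since $\text{Co}_p(\Lambda)$ is $\bar\delta$-quasiconvex, its geodesic closure is contained in a $c(\bar\delta)$-neighborhood of itself (iterating the quasiconvexity: adjoining all geodesics between points of a $\bar\delta$-quasiconvex set in a $\bar\delta$-hyperbolic space yields a set still within bounded Hausdorff distance — this is the standard fact that quasiconvex sets have geodesic hulls at bounded distance, with the bound depending only on $\bar\delta$). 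On the other hand $\text{Co}_p(\Lambda)$ is itself contained in $C_h(\Lambda)$: every geodesic ray from $p$ to a point of $\Lambda\subset C_h(\Lambda)$ must lie in the geodesically closed set $C_h(\Lambda)$, using that one endpoint $p$ may be taken in $C_h(\Lambda)$ after checking $C_h(\Lambda)\neq\emptyset$ and then that $C_h(\Lambda)$ is nonempty in $B^3$ whenever $\Lambda$ has at least two points (if $\Lambda$ is a single point the statement is trivial). Hence
\begin{equation*}
d_{H,h}(C_h(\Lambda),\text{Co}_p(\Lambda))\leq c(\bar\delta).
\end{equation*}
The identical argument in $\H^3$, whose hyperbolicity constant is also bounded by $\bar\delta$, gives $d_H(C_{\bar h}(\Lambda),\text{Co}^{\bar h}_p(\Lambda))\leq c(\bar\delta)$, where $\text{Co}^{\bar h}_p$ denotes the cone taken with respect to $\bar h$.

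Next I would compare the two cones over the \emph{same} set $\Lambda$ from the same basepoint $p$, one taken with respect to $h$ and one with respect to $\bar h$. A geodesic ray for $h$ from $p$ to $x\in\Lambda$ is a $(K,0)$-quasigeodesic for $\bar h$ with $K=K(h)$, by the remark in the Morse Lemma subsection; by Morse's Lemma it is within Hausdorff distance $r_0=r_0(K,0)$ of the \emph{unique} $\bar h$-geodesic ray with the same endpoints, and since both rays start at $p$ and the $h$-ray limits to $x\in S^2_\infty$, that $\bar h$-geodesic is exactly the ray from $p$ to $x$. Symmetrically, $\bar h$-geodesic rays are $(K',0)$-quasigeodesics for $h$ (again by the Morse Lemma subsection, since this is a uniformly bi-Lipschitz change of metric on the compact $M$) and hence within $r_0'$ of the corresponding $h$-ray. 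Taking unions over $x\in\Lambda$ and then closures in the cone topology yields
\begin{equation*}
d_H\big(\text{Co}_p(\Lambda),\text{Co}^{\bar h}_p(\Lambda)\big)\leq \max\{r_0,r_0'\}=:r_1(h),
\end{equation*}
where I write $d_H$ for the Hausdorff distance with respect to $\bar h$; since $h$ and $\bar h$ are uniformly comparable on $M$ (hence on $B^3$), the $h$- and $\bar h$-Hausdorff distances are comparable and it does not matter which we use in the final estimate, up to adjusting constants. Combining the three displayed bounds by the triangle inequality for Hausdorff distance gives $d_H(C_h(\Lambda),C_{\bar h}(\Lambda))\leq R(h)$, as claimed. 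The equality $C_h(\Lambda)\cap S^2_\infty=\Lambda$ follows because $\text{Co}_p(\Lambda)\cap S^2_\infty=\Lambda$ (stated above) and passing to a bounded neighborhood and then back to the geodesic closure does not change the trace at infinity: on one hand $\Lambda\subset C_h(\Lambda)$ forces $\Lambda\subset C_h(\Lambda)\cap S^2_\infty$; on the other, any point of $C_h(\Lambda)\cap S^2_\infty$ lies within bounded $h$-distance of $\text{Co}_p(\Lambda)$, which in the cone topology on $\bar B^3$ forces it to lie in $\overline{\text{Co}_p(\Lambda)}\cap S^2_\infty=\Lambda$.

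The main obstacle is the first step: making precise and uniform the claim that the geodesic closure of a $\bar\delta$-quasiconvex set in a $\bar\delta$-hyperbolic space stays within a Hausdorff distance depending only on $\bar\delta$, and that the cone $\text{Co}_p(\Lambda)$ really is $\bar\delta$-quasiconvex with constant independent of $\Lambda$ (the excerpt asserts this, so I may cite it, but I should be careful that adjoining geodesics between \emph{ideal} points, i.e.\ allowing triangles with vertices on $S^2_\infty$, still only costs $\bar\delta$ — this is exactly the form of $\bar\delta$-hyperbolicity stated in the paragraph preceding \eqref{cone.distance}). A secondary technical point is handling the degenerate cases $\#\Lambda\in\{0,1\}$ separately (the convex hull is then $\emptyset$ or a point and both assertions are immediate), and checking that closures in the cone topology commute appropriately with the Hausdorff-distance estimates, i.e.\ that a bounded-distance relationship between subsets of $B^3$ persists after taking closures in $\bar B^3$. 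None of these is deep, but they are where the care is needed; the Morse-Lemma comparison of cones in the second step is routine once the quasigeodesic constants are in hand.
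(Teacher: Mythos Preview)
Your overall architecture is exactly the paper's: sandwich both convex hulls between the cone $\text{Co}_p(\Lambda)$ and a bounded neighborhood of it, then compare the $h$-cone and the $\bar h$-cone via Morse's Lemma, and finish by the triangle inequality. The Morse step and the deduction of $C_h(\Lambda)\cap S^2_\infty=\Lambda$ are fine.

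The gap is in your justification of the key inclusion $C_h(\Lambda)\subset N_{c(\bar\delta)}(\text{Co}_p(\Lambda))$. Your argument ``iterate the quasiconvexity'' does not close up: if $A$ is $K$-quasiconvex in a $\bar\delta$-hyperbolic space then indeed $A_1:=\text{join}(A)\subset N_K(A)$ and $A_1$ is $2\bar\delta$-quasiconvex, but repeating this gives $A_n\subset N_{K+2(n-1)\bar\delta}(A)$, a bound that grows with $n$. There is no purely $\bar\delta$-hyperbolic mechanism that forces the (strong) convex hull---the intersection of all genuinely convex sets containing $\Lambda$---to lie in a uniform neighborhood of the cone; for that one needs actual convex barriers. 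The paper obtains this step by citing Bowditch (Prop.~2.5.4 of \cite{bowditch}), who in turn uses Anderson's construction \cite{anderson83} of convex sets $K(z,v)\subset\bar B^3$ in pinched negative curvature with the property that $K(z,v)$ misses a fixed ball about $z$ yet $\partial_\infty K(z,v)$ contains a fixed-angle cone at infinity as seen from $z$. If $z$ is far from $\text{Co}_p(\Lambda)$ then all of $\Lambda$ is seen from $z$ within a small angle, so $\Lambda\subset\partial_\infty K(z,v)$ for a suitable $v$; hence $C_h(\Lambda)\subset K(z,v)$ while $z\notin K(z,v)$. This barrier argument is a genuine input from the Riemannian geometry of curvature $\le -1$, not a formal consequence of $\bar\delta$-hyperbolicity, and it is precisely the ``main obstacle'' you flagged. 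Replace your iteration sketch with this citation (or the barrier argument itself) and your proof goes through.
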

\begin{proof}
The key step in the proof is the following claim:
\medskip

\noindent{{\bf Claim 1}:}{\em There is $R=R(h)$ so that for every $p\in C_h(\Lambda)$,  $$d_{H,h}(C_h(\Lambda), \text{Co}_p(\Lambda))\leq R.$$
In particular, $C_h(\Lambda)\cap S^2_{\infty}=\Lambda$.}
\medskip

We have at once that $\text{Co}_p(\Lambda)\subset C_h(\Lambda)$. 
In Proposition 2.5.4 of \cite{bowditch}, Bowditch used the existence of certain convex sets constructed by Anderson  in \cite{anderson83} and the fact that $\text{Co}_p(\Lambda)$ is $\bar\delta$-quasiconvex to show  the existence of a constant $R=R(h)$  so that $C_h(\Lambda)$ is contained in a $R$-neighborhood of $\text{Co}_p(\Lambda)$. This implies the claim.

If  $\gamma,\bar \gamma$ are two geodesics with respect to $h$ and $\bar h$ respectively that  connect $p\in \H^3$ (or $y\in S^2_{\infty}$) to $x\in S^2_{\infty}$, Morse's Lemma gives the existence of a constant $r_0$ depending only on $h$  so that  $d_H(\gamma,\bar\gamma)\leq r_0$. From this we deduce that 
$$\text{dist}(C_h(\Lambda), C_{\bar h}(\Lambda))\leq r_0\quad\text{and}\quad d_H(\text{Co}_p(\Lambda,h),\text{Co}_p(\Lambda,\bar h))\leq r_0,$$
where $\text{Co}_p(\Lambda,h)$ denotes the cone with respect to $h$. Combining these inequalities with \eqref{cone.distance} and Claim 1 we deduce the desired result at once.
\end{proof}

Let $G$ be a quasi-Fuchsian surface group and set $N:=\H^3\setminus G$.  Because $\Lambda(G)\subset S^2_{\infty}$ is $G$-invariant, $C_h(\Lambda(G))$ is also $G$-invariant and    $C_h(N):=C_h(\Lambda(G))\setminus G$  is a compact subset of the  $N$ {(see \cite[Section 8.2]{thurston}).}
 \begin{prop}\label{convex.core}  Every closed immersed minimal surface in $(N,h)$ is contained in $C_h(N)$.
\end{prop}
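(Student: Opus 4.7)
The plan is a strong maximum principle argument applied to the distance function to the convex core. Set $\rho:N\to [0,\infty)$, $\rho(x)=d_h(x,C_h(N))$; equivalently $\rho$ descends from $\tilde\rho(y)=d_h(y,C_h(\Lambda(G)))$ on $\H^3$, which is $G$-invariant since $C_h(\Lambda(G))$ is $G$-invariant. Because $(\H^3,h)$ is a Hadamard manifold and $C_h(\Lambda(G))$ is closed and geodesically convex by definition of the convex hull, the nearest-point projection to $C_h(\Lambda(G))$ is single-valued, $\tilde\rho$ is $1$-Lipschitz, and $\tilde\rho$ is of class $C^{1,1}$ on $\H^3\setminus C_h(\Lambda(G))$ with $|\nabla\tilde\rho|=1$ there.

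The key input from the hypothesis $\sec_h\leq -1$ is the Hessian lower bound: at every point where $\tilde\rho>0$,
\[
\text{Hess}_h\,\tilde\rho(v,v)\;\geq\;\tanh(\tilde\rho)\bigl(|v|^2-\langle v,\nabla\tilde\rho\rangle^2\bigr)\qquad\text{for all }v\in T\H^3,
\]
which follows by Rauch comparison applied to Jacobi fields along the perpendicular geodesic from the nearest point in $C_h(\Lambda(G))$, measured against the extremal model (distance to a totally geodesic plane in hyperbolic $3$-space of constant curvature $-1$). The bound passes to $\rho$ on $N$. Now let $\Sigma\subset N$ be a closed minimal surface and suppose $p\in\Sigma$ has $\rho(p)>0$. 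Because the mean curvature of $\Sigma$ vanishes, tracing the inequality over the tangent plane gives
\[
\Delta_\Sigma\rho(p)=\text{tr}_{T_p\Sigma}\text{Hess}_h\,\rho\;\geq\;\tanh(\rho(p))\bigl(2-|P_{T_p\Sigma}\nabla\rho|^2\bigr)\;\geq\;\tanh(\rho(p))>0,
\]
since $|\nabla\rho|=1$ implies $|P_{T_p\Sigma}\nabla\rho|^2\leq 1$. Thus $\rho|_\Sigma$ is strictly subharmonic at every point where it is positive. Since $\Sigma$ is compact, $\rho|_\Sigma$ attains its maximum at some $p_0\in\Sigma$; if $\rho(p_0)>0$, the elementary second-derivative test forces $\Delta_\Sigma\rho(p_0)\leq 0$, contradicting the displayed inequality. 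Hence $\rho\equiv 0$ on $\Sigma$ and $\Sigma\subset C_h(N)$.

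The main technical subtlety is justifying the Hessian inequality when $\partial C_h(\Lambda(G))$ is irregular: the limit set is only a Jordan quasicircle, so the convex hull need not have smooth boundary. The maximum principle, however, is applied only at a single point $p_0$ with $\rho(p_0)>0$, where the nearest-point projection of the lift of $p_0$ is automatically unique in the Hadamard setting. One can then either apply Rauch comparison directly in a neighborhood of that lift, or regularize by replacing $C_h(\Lambda(G))$ with its $\varepsilon$-tubular neighborhood $C_h(\Lambda(G))_\varepsilon$, which remains convex with $C^{1,1}$ boundary, establish the bound in the smooth setting, and let $\varepsilon\to 0$.
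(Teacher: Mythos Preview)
Your argument is correct and takes a more direct route than the paper's. The paper uses only the (non-strict) convexity of $\tilde d=\rho$ from Bishop--O'Neill, approximates it by smooth nearly-convex functions via Greene--Wu, integrates by parts on $\Sigma$ to force $\rho|_\Sigma$ to be constant, and only at the very end invokes strict convexity of the sublevel set $\{\rho\le t\}$ (coming from strictly negative curvature) to rule out $t>0$. You instead exploit $\sec_h\le -1$ from the outset to obtain the quantitative bound $\text{Hess}_h\,\rho\ge \tanh(\rho)(g-d\rho\otimes d\rho)$ and run the strong maximum principle in one stroke; this is cleaner and gives a pointwise inequality, while the paper's route has the virtue of isolating the use of strict negativity to a single final step and of avoiding any discussion of support Hessians.

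One small correction on the technical point you flag: passing to the $\varepsilon$-neighborhood $C_\varepsilon$ does not help, since $d_h(\cdot,C_\varepsilon)=\rho-\varepsilon$ on $\{\rho>\varepsilon\}$ and hence has exactly the same regularity as $\rho$. Your first suggestion is the right one, and can be made precise as follows. At the maximum $p_0$ with foot point $q_0$ and normal geodesic $\gamma$, the first-variation angle condition for nearest-point projection onto a convex set in a Hadamard manifold gives $C_h(\Lambda(G))\subset H^-:=\exp_{q_0}\{w\in T_{q_0}B^3:\langle w,\gamma'(0)\rangle\le 0\}$, so $\Phi:=d_h(\cdot,H^-)$ satisfies $\Phi\le\rho$ with equality at $p_0$. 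The hypersurface $\partial H^-=\exp_{q_0}(\gamma'(0)^\perp)$ is smooth with vanishing second fundamental form at $q_0$, there are no focal points along $\gamma$, and $q_0$ is the unique nearest point to $p_0$; hence $\Phi$ is smooth near $p_0$, and Riccati comparison along $\gamma$ with initial data $0$ yields $\text{Hess}_h\,\Phi(p_0)(v,v)\ge\tanh(r)\,|v|^2$ for $v\perp\gamma'(r)$. Restricting $\Phi$ to $\Sigma$ then gives the contradiction exactly as you wrote.
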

\begin{proof}
Let $\tilde d:N\rightarrow [0,\infty)$ be the distance function to $C_h(N)$. If $\pi$ denotes the projection from $(B^3,h)$ to $(N,h)$, we have that $\pi^{-1}(C_h(N))= C_h(\Lambda(G))$ is a geodesically convex set and so  Proposition 4.7  in \cite{bishop} says that $\tilde d$ is a  continuous convex function (Theorem 4.7 \cite{bishop} is misstated because it requires the subset of $N$ to be geodesically convex instead of requiring the inverse image of the set  under the covering map to be geodesically convex).

Given $\Sigma$ a closed connected minimal immersion, there is $l>0$ so that $\Sigma\subset \tilde d^{-1}[0,l)$ and set $K=\tilde d^{-1}[0,l+1]$.

The function $\tilde d$ does not have to be smooth but we can apply \cite[Theorem 2]{greene} to obtain a sequence of smooth functions $\{\phi_i\}_{i\in\N}$ so that $\phi_i$ tends to $\tilde d$ uniformly in $K$ as $i\to\infty$ and, setting
$$\lambda(\phi_i)=\min\{D^2\phi_i(v,v): x\in K, v\in T_xN, |v|=1\},$$
we have 
$\liminf_{i\to\infty} \lambda(\phi_i)\geq 0$.  Hence  $\Delta_{\Sigma}\phi_i\geq \lambda(\phi_i)$ on $\Sigma$ because $\Sigma$ is a minimal surface.

Set $\phi_i^{+}=\max\{\phi_i,0\}$. We have
$$\int_{\{x\in \Sigma:\phi_i\geq 0\}}|\nabla \phi_i|^2dA_h=-\int_{\Sigma}\phi_i^{+}\Delta\phi_idA_h\leq -\lambda(\phi_i)\int_{\Sigma}\phi_i^{+} dA_h$$
and so 
$$\lim_{i\to\infty} \int_{\{x\in \Sigma:\phi_i\geq 0\}}|\nabla \phi_i|^2dA_h=0.$$

Suppose that $\Sigma\cap \tilde d^{-1}\{\delta\}\neq \emptyset$ for some $l>\delta>0$. Note that $\tilde d\in W^{1,2}(\Sigma)$, the functions  $\phi_i$ converges weakly  to $\tilde d$ in $W^{1,2}(\Sigma)$ as $i\to\infty$, and so
$$\int_{\tilde d^{-1}[\delta,l]\cap \Sigma}|\nabla d|^2dA_h\leq \liminf_{i\to\infty}\int_{\{x\in \Sigma:\phi_i\geq 0\}}|\nabla \phi_i|^2dA_h=0.$$
Thus there is some $t\geq \delta$ so that $\Sigma\subset \tilde d^{-1}(t)$. {An inspection of the proof of Proposition 4.7 of \cite{bishop} shows that $\{\tilde d\leq t\}$ is actually geodesically strictly convex because the ambient curvature is strictly negative and so it cannot contain the minimal surface $\Sigma$ in its boundary $\partial\{\tilde d\leq t\}=\tilde d^{-1}(t)$.}
\end{proof}

\begin{proof}[Proof of Theorem \ref{convex.hull.main.thm}]
Without loss of generality we can assume that $S$ is orientable.

Let $G$ be the surface group that preserves both $D$ and $\Omega$ so that $S=D\setminus G$ and $\Sigma=\Omega\setminus G$. Set $\Lambda$ to be the Jordan curve $\Lambda(G)$. From Theorem \ref{uhlenbeck.thm} there is $\bar t=\bar t(\lambda_0)$ so that
\begin{equation}\label{hausdorff.convex.hull}
d_H(C_{\bar h}(\Lambda),D)\leq \bar t
\end{equation}
and,  for all $x\in D$, if $\gamma_x$ denotes the unit speed hyperbolic geodesic with  $\gamma_x(0)=x$ and $\gamma'_x(0)$ orthogonal to $T_xD$, we have
\begin{equation}\label{geod.away}
\text{dist}(\gamma(t), C_{\bar h}(\Lambda))\geq R+1\quad\text{for all }|t|\geq \bar t+R,
\end{equation} 
where $R=R(h)$ is the constant given by Proposition \ref{cor.convex.hull}.

From Proposition \ref{convex.core} we have that  $\Omega\subset C_{h}(\Lambda)$ and thus we obtain from \eqref{hausdorff.convex.hull} and Corollary \ref{cor.convex.hull} that $\Omega$ is contained in the $\bar t+R$-neighborhood of $D$.

To deduce the other inclusion pick $x\in D$. We have that $\gamma_x(+\infty), \gamma_x(-\infty)$ lie in different connected components of $S^2_{\infty}\setminus \Lambda$. Because $\overline \Omega\subset \overline B^3$ is a disc with the same boundary as $\overline D$, $\gamma_x$ must intersect $\Omega$ in at least one point $\gamma(t)\in \Omega\cap\gamma$. From \eqref{geod.away}  and Proposition \ref{cor.convex.hull} we have that $|t|\leq \bar t+R$  and so $d(x,\Omega)\leq \bar t+R$. 
\end{proof}

\section{Almost-Fuchsian surface groups}\label{black.box}

Let  $s(M,g,\varepsilon)$ denote the cardinality of  $S(M,g,\varepsilon)$, the set of $\varepsilon$-Fuchsian surface subgroups of genus at most $g$, modulo the equivalence relation of  conjugacy. Recall that we defined $S_{\varepsilon}(M)=\cup_{g\in\N}S(M,g,\varepsilon)$.

\begin{prop}\label{compactness-thm} Suppose we have a sequence $\Pi_i\in S_{\delta_i}(M)$, where $\delta_i\to 0$ as $i\to\infty$. For each $i\in\N$, there is an essential minimal surface $S_i$ in the homotopy class $\Pi_i$ so that $\area(S_i)=\area(\Pi_i)$ and  
\begin{equation}\label{seppi.A}
\lim_{i\to\infty}||A||^2_{L^{\infty}(S_i)}=0.
\end{equation}
Moreover, if $D_i$ is a disc lifting  $S_i$ to $\H^3$ that is  preserved by the surface group $G_i<\Gamma$ induced by $S_i$ and intersecting a fixed compact set in $\H^3$ for all $i\in\N$, there  is a totally geodesic disc $D\subset \H^3$ such that, after passing to a subsequence, $D_i$ converges smoothly to $D$ on compact sets and $\Lambda(G_i)$ converges in Hausdorff distance to $\partial_{\infty} D$ in $S^2_{\infty}$.
\end{prop}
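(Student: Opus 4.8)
The plan is to produce the minimal surfaces $S_i$ by a standard area-minimization argument in the appropriate homotopy class, then to bound their second fundamental forms using the fact that the limit sets $\Lambda(G_i)$ are $(1+\delta_i)$-quasicircles with $\delta_i\to 0$, and finally to extract a smooth limit disc using curvature estimates. For the first part, given $\Pi_i\in S_{\delta_i}(M)$, I would invoke the classical existence theory for incompressible minimal surfaces in $3$-manifolds (Schoen–Yau, Sacks–Uhlenbeck): since the surface subgroups are essential, one can minimize area in the homotopy class and obtain an essential branched minimal immersion, which is in fact an immersion (no branch points, by Osserman/Gulliver, since the map is $\pi_1$-injective), realizing $\area(S_i)=\area(\Pi_i)$. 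This gives the first assertion.

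The heart of the matter is \eqref{seppi.A}. Here I would argue by contradiction: if $\sup_i \|A\|_{L^\infty(S_i)}>0$ along a subsequence, pick points $p_i\in S_i$ where $\|A\|$ is comparable to its supremum, lift to $\H^3$ so that (a translate of) $p_i$ stays in a fixed compact set, and pass to a limit of the lifted minimal discs $D_i$. By the monotonicity formula, curvature estimates for stable/area-minimizing minimal surfaces, and the fact that these discs are area-minimizing in their quasi-Fuchsian quotients (or at least have uniformly bounded area on balls of fixed radius, coming from the genus bound via Gauss–Bonnet once one controls the diameter), one extracts a smooth limit minimal surface $D_\infty\subset\H^3$, complete and properly embedded. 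The key point is that the boundary at infinity $\partial_\infty D_i=\Lambda(G_i)$ is a $(1+\delta_i)$-quasicircle, so $\partial_\infty D_\infty$ is a round circle; a complete minimal surface in $\H^3$ whose boundary at infinity is a round circle — combined with the fact that the geodesic disc spanning that circle is the unique area-minimizer with that boundary data, or alternatively an argument via the maximum principle / convex hull in Proposition \ref{cor.convex.hull} forcing $D_\infty$ into a thin neighborhood of a totally geodesic disc — must be the totally geodesic disc itself, hence has $A\equiv 0$, contradicting the lower curvature bound at $p_i$. I expect this contradiction step — making rigorous that "quasicircle boundary with dilatation $\to 1$" forces "second fundamental form $\to 0$ uniformly," uniformly over the whole family rather than just near a single blow-up point — to be the main obstacle; one likely needs the quantitative version, e.g. the maximum principle comparison of $D_i$ with geodesic discs spanning nearby round circles inside and outside $\Lambda(G_i)$, to get the uniform sup-norm bound rather than merely a limit statement.

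For the final assertion, once \eqref{seppi.A} is established, the discs $D_i$ have second fundamental forms tending uniformly to zero, so (using that $D_i$ meets a fixed compact set and elliptic estimates for the minimal surface equation, together with uniform local area bounds) a subsequence converges smoothly on compact subsets of $\H^3$ to a complete minimal surface $D$ with $A\equiv 0$, i.e., a totally geodesic disc. Finally, to get Hausdorff convergence of $\Lambda(G_i)=\partial_\infty D_i$ to $\partial_\infty D$, I would use that by Theorem \ref{uhlenbeck.thm}(iv) (applicable for $i$ large once $\|A\|_{L^\infty(S_i)}<1$) each $D_i$ lies within a bounded hyperbolic neighborhood — with bound $\bar t(\|A\|_{L^\infty(S_i)})\to 0$ — of its convex hull $C_{\bar h}(\Lambda(G_i))$, and conversely controls $\Lambda(G_i)$ in terms of $D_i$; combined with smooth compact convergence of $D_i\to D$ and the cone topology on $\overline{B}^3$, this yields $\Lambda(G_i)\to\partial_\infty D$ in the Hausdorff distance on $S^2_\infty$. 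The only care needed is that convergence on compact sets plus the uniform "thin convex hull" estimate together pin down the limit at infinity, which is exactly where the $\delta_i\to 0$ hypothesis re-enters.
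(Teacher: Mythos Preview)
Your outline matches the paper's structure for the existence step and the final compactness step, but diverges at the crucial estimate \eqref{seppi.A}. The paper does not prove this via a blow-up/contradiction argument; it simply invokes Seppi's theorem \cite{seppi}, which says directly that for a minimal disc in $\H^3$ bounded at infinity by a $(1+\delta)$-quasicircle, $\|A\|_{L^\infty}$ is controlled by a function of $\delta$ tending to zero. Seppi's proof proceeds exactly along the lines you sketch at the end (trapping the convex hull of $\Lambda(G_i)$ between two equidistant surfaces to a totally geodesic plane whose principal curvatures are $o_\delta(1)$, then using these as barriers), so the ``quantitative maximum-principle comparison'' you anticipate is precisely the missing ingredient, already packaged in the literature.

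Your proposed contradiction argument, as written, has a genuine soft spot at the step ``$\partial_\infty D_\infty$ is a round circle.'' Compact convergence of $D_i\to D_\infty$ does not by itself force $\partial_\infty D_i\to\partial_\infty D_\infty$; to pin down the asymptotic boundary you need some uniform control at infinity --- e.g.\ that the convex hull of a $(1+\delta)$-quasicircle is $o_\delta(1)$-thin --- which is essentially the content of Seppi's theorem again. So the circularity you worried about is real: the compactness route to \eqref{seppi.A} quietly presupposes the barrier estimate that Seppi supplies. Citing \cite{seppi} cuts this knot in one line.

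Two smaller differences worth noting. For the smooth convergence $D_i\to D$, the paper first uses \eqref{seppi.A} together with Theorem~\ref{uhlenbeck.thm} to conclude that each $D_i$ is embedded and locally area-minimizing mod $2$, and then invokes compactness for such cycles; this cleanly gives multiplicity-one graphical convergence, which your sketch leaves implicit. For the Hausdorff convergence $\Lambda(G_i)\to\partial_\infty D$, the paper does not use the thin-convex-hull route you propose but instead argues directly with geodesic rays: intrinsic geodesics in $D_i$ have ambient geodesic curvature uniformly below $1$ (since $|A|\to 0$), so they stay within a fixed tubular neighborhood of the corresponding hyperbolic geodesic rays, and passing to the limit traps $\Lambda(G_i)$ near $\partial_\infty D$. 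Your convex-hull approach via Theorem~\ref{uhlenbeck.thm}(iv) would also work, but the ray argument is more self-contained.
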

\begin{proof}
For each $i\in\N$, consider the essential immersion $S_i\subset M$ that minimizes area with respect to the hyperbolic metric in the homotopy class $\Pi_i$ (using \cite{schoen-yau} for instance). If $D_i$ is the minimal disc lifting  $S_i$ to $\H^3$ that is preserved by the surface group $G_i<\Gamma$ induced by $S_i$ then we have from Theorem 1 in \cite{seppi} that $||A||^2_{L^{\infty}(D_i)}$ tends to zero as $i\to\infty$. The author achieves this by  showing  that the convex hull of $\Lambda(G_i)$ is contained, for all $i$ sufficiently large, between two equidistant disks whose principal curvatures are arbitrarily small, have the same boundary at $S^2_{\infty}$, and are arbitrarily close to each other in Hausdorff distance. 

Assume that all the discs $D_i$  intersect a compact set. We now argue that, after passing to a subsequence, the discs $D_i$ converge to a totally geodesic disc with multiplicity one. From Theorem \ref{uhlenbeck.thm} we know that, for all $i$ sufficiently large, $D_i$ is embedded,  $S_i$ is the unique closed  embedded minimal surface in $M_i=\H^3\setminus G_i\simeq T^{\bot}S_i$  and so area-minimizing in $M_i$  among all mod 2 cycles representing the same element in $H_2(M_i;\Z_2).$ As a result, $D_i$ is locally area-minimizing among mod 2 cycles as well. 
Pick $p_i\in D_i$ which converges, after passing to a subsequence, to some $p\in \H^3$. From the fact that for all $i$ sufficiently large, the embedded discs $D_i$ are locally area-minimizing  among mod 2 cycles, we obtain from  standard compactness theory for minimal surfaces   the existence of a totally geodesic disc $D\subset \H^3$ containing $p$ such that, after passing to another subsequence, $D_i$ converges graphically to $D$ on compact sets.

Consider  $q_i\in\Lambda(G_i)$,  $\sigma_i\subset \H^3$  the geodesic ray with $\sigma_i(0)=p_i$, $\sigma_i(+\infty)=q_i$, and $\gamma_i\subset D_i$ the  geodesic ray (for the induced metric on $D_i$) with $\gamma_i(0)=p_i$, $\gamma_i(+\infty)=q_i$.  The geodesic curvature of $\gamma_i$ in $\H^3$ is a fixed amount below $1$ for all $i$ sufficiently large and so, using tubular neighborhoods  of $\sigma_i$ as barriers, we deduce the existence of $r>0$ so that  $\gamma_i$ is contained in a $r$-tubular neighborhood of $\sigma_i$  for all $i\in\N$. Thus, after  passing to a subsequence, both curves converge on compact sets to the same geodesic ray $\sigma\subset D$. Using this fact, the reader can deduce that  $\Lambda(G_i)$ converges in Hausdorff distance to $\partial_{\infty} D$ in $S^2_{\infty}.$
\end{proof}
Using the above proposition we now show the following improvement to the main results of \cite{kahn-markovic2,kahn-markovic}.

\begin{thm}\label{counting.thm} There are positive constants $c_1=c_1(M,\varepsilon)$, $c_2=c_2(M)$, and $k=k(M,\varepsilon)\in\N$ so that  for all $g\geq k$ we have
$$(c_1g)^{2g} \leq s(M,g,\varepsilon) \leq  (c_2g)^{2g}.
 $$
 Moreover, there is a subset $G(M,g,\varepsilon)\subset S(M,g,\varepsilon)$ with more than $(c_1g)^{2g}$ elements so that any sequence  of homotopy classes $\Pi_i\in G(M,g_i,1/i)$, $i\in\N$ has a representative {$\phi:S_i\rightarrow M$} so that 
 \begin{itemize}
 \item[(a)] $S_i$ is a minimal {immersion} with $\area(S_i)=\area(\Pi_i)$ and $$\lim_{i\to\infty}\sup_{S_i}|A|=0;$$
 \item[(b)] after passing to a subsequence, the Radon measure 
 {$$f\in C^0(M)\mapsto \mu_i(f)=\frac{1}{\text{area}(S_i)}\int_{S_i}f\circ \phi dA$$}
 converges to a measure $\nu$  which is positive on every open set of $M$.
 \end{itemize}
\end{thm}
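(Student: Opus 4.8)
\textbf{Proof proposal for Theorem \ref{counting.thm}.}

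The plan is to obtain the two-sided cardinality bound by feeding the $\varepsilon$-Fuchsian condition into the Kahn--Markovic construction. For the upper bound $s(M,g,\varepsilon)\le (c_2 g)^{2g}$, I would use the construction/counting from \cite{kahn-markovic} directly: the number of \emph{all} surface subgroups of genus at most $g$ (not just $\varepsilon$-Fuchsian ones) is already bounded above by $(c_2 g)^{2g}$ for a constant $c_2=c_2(M)$, essentially because such a subgroup is determined by a word of length $O(g\log g)$ in $\Gamma$ (a pants decomposition with $O(g)$ pieces, each specified by group elements of bounded length), and the number of such words is $(c_2 g)^{2g}$. Since $S(M,g,\varepsilon)\subset S(M,g)$, the same bound applies and $\varepsilon$ plays no role on this side. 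For the lower bound, I would invoke the refined Kahn--Markovic construction of \emph{nearly geodesic} (``$\epsilon$-well-connected'') surface subgroups: for any $\epsilon>0$ there is $R_0(\epsilon)$ so that for every $R\ge R_0$ one builds a large family of surface subgroups whose limit sets are $(1+\delta(\epsilon))$-quasicircles with $\delta(\epsilon)\to 0$ as $\epsilon\to 0$; the number of these of a given genus $g(R)\sim e^{cR}$ is at least $(c_1 g)^{2g}$. Choosing $\epsilon=\epsilon(\varepsilon)$ small enough that $\delta(\epsilon)<\varepsilon$ places all of them in $S(M,g,\varepsilon)$ and gives $s(M,g,\varepsilon)\ge (c_1 g)^{2g}$ for $g\ge k(M,\varepsilon)$. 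I would take $G(M,g,\varepsilon)$ to be precisely this Kahn--Markovic family.

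With $G(M,g,\varepsilon)$ in hand, parts (a) and (b) are where Proposition \ref{compactness-thm} does the work. Given $\Pi_i\in G(M,g_i,1/i)$, since the limit sets are $(1+\delta(1/i))$-quasicircles with $\delta(1/i)\to 0$, we have $\Pi_i\in S_{\delta_i}(M)$ with $\delta_i\to 0$; Proposition \ref{compactness-thm} then produces minimal immersions $S_i$ in the class $\Pi_i$ with $\area(S_i)=\area(\Pi_i)$ and $\|A\|_{L^\infty(S_i)}\to 0$, which is exactly statement (a). (Strictly, Proposition \ref{compactness-thm} is phrased for a single sequence; one applies it to the given sequence $\Pi_i$.)

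For (b), I would argue by a standard equidistribution-type compactness. The measures $\mu_i$ are probability measures on the compact manifold $M$, so after passing to a subsequence $\mu_i\rightharpoonup\nu$ for some probability measure $\nu$; the content is that $\nu$ is positive on every open set. The key input is a uniform lower \emph{density} bound: by (a) the surfaces $S_i$ have second fundamental form tending to zero, hence are locally graphical on a \emph{uniform} scale $r_0>0$ over their tangent planes, with uniformly bounded area ratios from above and below; so for any $p\in M$ and any $0<r<r_0$ one has $\mu_i(B_r(p))\ge c(r)>0$ \emph{provided} $S_i$ meets $B_{r/2}(p)$. Thus it suffices to show that the surfaces $S_i$ become, in a weak sense, dense in $M$: that for every ball $B_r(p)\subset M$, $S_i\cap B_r(p)\ne\emptyset$ for all $i$ large. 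This I would get from the lifting picture together with Theorem \ref{convex.hull.main.thm} / Proposition \ref{compactness-thm}: lift $S_i$ to a disc $D_i\subset\H^3$ preserved by $G_i$; normalizing so that $D_i$ meets a fixed compact set, $D_i$ subconverges to a totally geodesic disc $D$ and $\Lambda(G_i)\to\partial_\infty D$; but in the Kahn--Markovic construction the surfaces are built so that their normal vectors fill out a definite portion of $\mathcal F(M)(\bar h)$ (indeed the whole point of the ``well-connected'' construction is that the unit normal bundle of $S_i$ is $\epsilon$-dense in the frame bundle), and this $\epsilon$-density, with $\epsilon\to 0$, forces $S_i$ to meet every fixed ball once $i$ is large. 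Combined with the density lower bound this yields $\nu(B_r(p))>0$.

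The main obstacle is part (b): extracting a genuine geometric statement (``$S_i$ meets every fixed ball for $i$ large'') from the Kahn--Markovic construction. Kahn--Markovic assemble $S_i$ out of nearly geodesic pants glued along curves whose normal frames are chosen to be $\epsilon$-dense in $\mathcal F(M)(\bar h)$ using mixing of the frame flow; one must quote this equidistribution-of-pieces property carefully and check that it survives the passage to the minimal representative $S_i$ given by Proposition \ref{compactness-thm} (which is homotopic, not equal, to the Kahn--Markovic surface, but $C^0$-close to it since both are close to the convex hull of the same $(1+\delta_i)$-quasicircle limit set). The uniform graphicality/area-ratio bounds in (a) are then routine from $\|A\|_{L^\infty(S_i)}\to 0$ plus a monotonicity formula, and the weak-$*$ compactness and lower semicontinuity giving $\nu\ge 0$ on open sets is standard measure theory.
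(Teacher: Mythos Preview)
Your upper bound and part (a) match the paper exactly. For the lower bound the paper takes a slightly different (and for later purposes more convenient) route: rather than counting the whole Kahn--Markovic family at parameter $R$, it fixes a \emph{single} $(1+\varepsilon)$-quasigeodesic surface $\Sigma_\varepsilon$ of some genus $g_0$ produced by \cite{kahn-markovic2}, and defines $G(M,g,\varepsilon)$ to be the set of conjugacy classes coming from \emph{finite covers} of $\Sigma_\varepsilon$ of genus $\le g$. The count $(c_1 g)^{2g}$ then follows from the M\"uller--Puchta formula for the number of index-$n$ subgroups of a genus $g_0$ surface group. This choice is not cosmetic: since a cover has the same normalised integration measure as its base, part (b) reduces immediately to the base surfaces $\Sigma_{1/i}$ themselves.

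There is a genuine gap in your argument for (b). You assert that graphicality and monotonicity give $\mu_i(B_r(p))\ge c(r)>0$ whenever $S_i$ meets $B_{r/2}(p)$. But $\mu_i$ is \emph{normalised} by $\area(S_i)\sim 4\pi(g_i-1)\to\infty$; monotonicity only yields $\area(S_i\cap B_r(p))\ge c(r)$, hence $\mu_i(B_r(p))\ge c(r)/\area(S_i)\to 0$. Showing that $S_i$ eventually meets every ball is therefore far too weak: you need a \emph{definite proportion} of the total area in every ball, uniformly in $i$. The paper obtains this from the quantitative equidistribution built into Hamenst\"adt's presentation \cite{hamenstadt} of the construction: each skew-pant $P$ appearing in $\Sigma_\varepsilon$ is indexed (up to homotopy) by a point of $\mathcal F(M)^2$ via a map $\hat P$, and the multiplicity $n_P$ with which $P$ occurs satisfies $n_P\ge \tfrac{1}{2}h(P)\cdot 2(g-1)$, where $h$ is the push-forward of a measure absolutely continuous with respect to Lebesgue on $\mathcal F(M)^2$. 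Summing over the pants whose tripod center lies over a ball $B$ converts this into $\area(\Sigma_i\cap B)\ge c(B)\,\area(\Sigma_i)$, which is exactly the proportionality you need. Finally, the transfer from the quasigeodesic representative $\Sigma_i$ to the minimal representative $S_i$ is a separate lemma (the paper's Lemma \ref{measures.same}, showing $\lim\mu_i=\lim\nu_i$); your observation that the two are $C^0$-close via the convex hull is the right starting point, but one still has to compare the \emph{normalised} measures, which the paper does by exploiting that $D_i$ is locally area-minimizing mod $2$ and that most of $\Sigma_i$ is nearly flat on a fixed scale.
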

\begin{proof}
If $s(M,g)$ denotes the cardinality of $S(M,g)$, it was shown in Theorem 1.1 from \cite{kahn-markovic} the existence of  $c_2>0$ so that $s(M,g) \le (c_2g)^{2g}$ for all $g$ large.  Since $s(M,g,\varepsilon)\le s(M,g)$, the upper bound is verified.

We now verify the lower bound.  In \cite{kahn-markovic2} the authors show that for all $\varepsilon>0$ there is a Fuchsian group $K$ (preserving a totally geodesic plane $C(\gamma)$ for some geodesic circle $\gamma$) and a $(1+\varepsilon)$-quasiconformal map $\Phi:S^2_{\infty}\rightarrow S^2_{\infty}$ so that $G=\Phi\circ K\circ \Phi^{-1}$ is a surface subgroup of $\Gamma$. The map $\Phi$ admits an extension $F:\mathbb{H}^3\rightarrow \H^3$ that is equivariant with respect to $K$ and $G$ and a $(1+o_{\varepsilon}(1), o_{\varepsilon}(1))$- quasi-isometry, where  $o_{\varepsilon}(1)$ denotes a quantity depending only on $M$ and $\varepsilon$ that tends to zero as $\varepsilon\to 0$. As a result, the essential surface  $\Sigma_\varepsilon=F(C(\gamma)\setminus K)\subset M$ induces an element of $S_{\varepsilon}(M)$. $\Sigma_\varepsilon$ has the property that geodesics with respect to the intrinsic distance are $(1+\varepsilon,\varepsilon)$-quasigeodesics and we denote such surfaces  by {\em $(1+\varepsilon)$-quasigeodesic surfaces}.

Let $g_0$ denote the genus of $\Sigma_\varepsilon$.  If $\Sigma_n$ denotes a degree $n$ cover of $\Sigma_\varepsilon$ then its genus  is $g=n(g_0-1)+1$ and so $\Sigma_n$ induces an element in $s(M,g,\varepsilon)$.  The  M\"uller-Puchta's formula says that the number of index $n$ subgroups of a genus
$g_0$ orientable surface  grows like $2n(n!)^{2g_0-2}(1+o(1))$ and so (using Stirling's approximation) we get the estimate $$s(M,g,\varepsilon) \ge (c_1 g)^{2g}$$
where $c_1>0$ depends on $g_0$ which in turn depends only on $M$ and $\varepsilon$.

We set $G(M,g,\varepsilon)$ to be the homotopy classes that come from finite covers of $\Sigma_\varepsilon$ and have genus less than or equal to $g$. We now describe in more detail the properties of  $\Sigma_\varepsilon$ as they will be needed to show that item (a) and (b)  of the theorem hold. 

In \cite{hamenstadt} Hamenstadt extended the results of \cite{kahn-markovic2} to some rank one locally symmetric spaces, building on the work of Khan-Markovic. We follow the geometric description  and the notation of  \cite{hamenstadt}.


The basic building blocks are called $(R,\delta)$-geometric skew-pants $P$ (or simply just geometric skew-pants) and they are defined in Section 4 and Section 6 of \cite{hamenstadt}.  The boundary of  $P$ consists of $3$ closed geodesics in $M$ and $P$ decomposes into  $5$ polygon regions (two center triangles and three twisted bands using the notation in \cite[Section 6]{hamenstadt}) with geodesic boundary.  Each polygon is a smooth immersion whose principal curvatures depend uniformly on $(R,\delta)$ and can be made arbitrarily small by choosing $R$ sufficiently large and $\delta$ sufficiently small. Regions that share  a common geodesic side have the property that the corresponding co-normals make an angle  as close to  $\pi$ as desired by choosing $R$ large and $\delta$ small.  Given any $0<\eta<1$ there is $d>0$ (independent of $R$ and $\delta$) so that the set of points $K^P$ in $P$ that are at an intrinsic distance less than or equal to $d$ from one of the center triangles has $(1-\eta)2\pi\leq \area(K^P)\leq \area(P)\leq (1+\eta)2\pi$ for all $R$ large and  $\delta$ small.  The seams of  a geometric skew-pants $P$ are three shortest geodesic arcs in $M$ (in the homotopy class defined by $P$) that connect  the three boundary geodesics of $P$.

 The quasigeodesic surface $\Sigma_{\varepsilon}$ is constructed by attaching several pairs of $(R,\delta)$-geometric skew-pants  $P, P'$ (see \cite[Definition 6.1]{hamenstadt} for the rigorous definition) so that they share a common boundary geodesic  $\beta$ (with opposite induced orientations),  the tangent planes of $P$ and $P'$ along $\beta$  can be made uniformly close to each other as $R\to\infty$ and $\delta\to 0$, and the endpoints of seams at $P$  in $\beta$ are at a fixed distance from the endpoints of seams at $P'$. This last property is important to ensure that a surface constructed this way will be $(1+\varepsilon)$-quasigeodesic if $R$ and $\delta$ are respectively very large and very small  (see \cite[Proposition 6.2]{hamenstadt}). Note that necessarily $\area(\Sigma_{\varepsilon})\simeq 4\pi(g-1)$ where $g$ is the genus of $\Sigma_{\varepsilon}$ and that in \cite[Lemma 3.1]{hamenstadt} it is shown that $\Sigma_{\varepsilon}$ is a locally CAT$(-1/2)$ space for all $\varepsilon$ sufficiently small.

 With  $0<\eta<1$ fixed, consider the set of points $K$ in $\Sigma_{\varepsilon}$ that are at an intrinsic distance less than or equal to $d$ (given as before) from any of the center triangles coming from the geometric skew-pants. We have  
\begin{equation}\label{eta.constant.lemma}
\area(K)\geq (1-\eta)(1+\eta)^{-1}\area(\Sigma_{\varepsilon})
\end{equation}
  and there is $\underline r>0$ so that, for all $R$ large and $\delta$ small, any intrinsic ball $\hat B_{\underline r}(x)$ in $\Sigma_{\varepsilon}$ of radius ${\underline r}$ centered at $x\in K$ intersects at most a finite number of the polygonal regions with geodesic boundary. In particular, by making $R$ large and $\delta$ small,  we have  $\hat B_{\underline r}(x)$ arbitrarily close to a totally geodesic disc for all $x\in K$. Call that region $K_{\eta}$.

Consider the minimal representatives $S_i$ in the homotopy class $\Pi_i\in G(M,g_i,1/i)$, $i\in\N$, given by Proposition \ref{compactness-thm}. Each $S_i$ is  homotopic to a $(1+1/i)$-quasigeodesic surface $\Sigma_i$ and choose discs $D_i, \Omega_i\subset \H^3$ that cover $S_i$ and $\Sigma_i$, respectively, and such that $\partial_{\infty}  D_i=\partial_{\infty}\Omega_i$. For all $i$ sufficiently large, $\Omega_i$ is a CAT$(-1/2)$ space \cite[Theorem II.4.1]{bridson} for which every geodesic arc can be extended \cite[Proposition II.5.10]{bridson}. Combining with the fact that  the principal curvatures of $S_i$ tend to zero  and that geodesics in $\Sigma_i$ lift to $(1+o_i(1))$-quasigeodesics in $\H^3$ we  obtain 
\begin{equation}\label{hausdorff.close.lemma}
d_H(D_i,\Omega_i)\to 0\quad\mbox{as }i\to\infty.
\end{equation}
Let $\mu_i$, $\nu_i$ denote  the unit Radon measure of $M$ induced by integration over $S_i$ and $\Sigma_i$, respectively.

\begin{lemm}\label{measures.same} $\lim_{i\to\infty}\mu_i=\lim_{i\to\infty}\nu_i$.
\end{lemm}
\begin{proof} Without loss of generality we can assume that both measures converge. Fix  $0<\eta<1$.

We saw in Proposition \ref{compactness-thm} that $D_i$ is locally area-minimizing mod 2 in $\H^3$ and so we have from \eqref{hausdorff.close.lemma} that for all $i$ sufficiently large and  every geodesic ball $B\subset \H^3$ of small radius
$$\area(D_i\cap B)\leq (1+\eta)\area(\Omega_i\cap B)$$
and thus from the fact that $\area(S_i)\area(\Sigma_i)^{-1}\to 1$ as $i\to\infty$ we have  $$\lim_{i\to\infty}\mu_i\leq (1+\eta) \lim_{i\to\infty}\nu_i.$$

Denote the set  $K_{\eta, i}\subset \Sigma_i$ simply by $K_i$ and let $\hat K_i \subset \Omega_i$ denote its pre-image. We have from \eqref{eta.constant.lemma} that for all $i$ sufficiently large, $\nu_i(M\setminus K_i)\leq 2\eta$. From the definition of  $\bar r$, the fact that geodesics in  $\Omega_i$ are $(1+o_i(1))$-quasigeodesics, and  \eqref{hausdorff.close.lemma} we have that for all $i$ sufficiently large and  all $x\in \hat K_i$,  $B_{\bar r}(x)\cap \Omega_i$ is very close to a geodesic disc of radius $\bar r$ in $D_i$. Thus for every geodesic ball $B\subset M$ of radius sufficiently small we obtain
$$\lim_{i\to\infty}\mu_i(B)\geq \lim_{i\to\infty}\nu_i(B\cap K_i)\geq \lim_{i\to\infty}\nu_i(B)-2\eta.$$
Making $\eta\to 0$ we deduce the result.
\end{proof}

In order to  prove property (b) of theorem we need to use some more properties related with the construction of $\Sigma_{\varepsilon}$.  Namely  the  nearly equidistribution  in the frame bundle $\mathcal F(M)$ of $M$ of the skew-pants that define $\Sigma_{\varepsilon}$.

An oriented $(R,\delta)$-skew pants is  defined as being the homotopy class of  some oriented $(R,\delta)$-geometric skew-pants immersion $f:P\rightarrow M$, where the homotopies  preserve the image and orientation of the boundary geodesics. The space $\mathcal P(R,\delta)$ of all such homotopy classes contains only finitely many elements.

Given a point $x=(p,(e_1,e_2,e_3))\in\mathcal F(M)$ we get a natural orientation in the $2$-plane  $V=span\{e_1,e_2\}\subset T_pM$ and an oriented ideal triangle $T(x)\subset V$  whose vertices  are
the endpoints of the geodesic ray based at $p$ with initial velocity $e_1$ and its $2\pi/3$ consecutive rotations in $U$ (see \cite[Section 4]{hamenstadt} for definitions: in the codimension one setting framed tripods and frames can be identified). For all $R$ large enough and $\delta$ small  it is shown in Lemma 7.4 of  \cite{hamenstadt}  (combined with Lemma 4.3  \cite{hamenstadt}) that for every pair $(x,y)\in \mathcal F(M)^2$ there are many  $(R,\delta)$-geometric skew-pants (which we denote by $P(x,y)$) with the property that, except from $3$ twisted bands whose area can be made very small, the rest of $P(x,y)$ is  arbitrarily close in Hausdorff distance  to the two ideal triangles $T(x), T(y)$.  One sees from the construction that these geometric skew-pants  have all the same geodesic boundary and are homotopic (this is explained in \cite[page 832]{hamenstadt}). Thus  we get a map  $\hat P:\mathcal F(M)^2\rightarrow P(R,\delta)$, where $\hat P(x,y)$ denotes the homotopy class of any of the $(R,\delta)$-geometric skew-pants $P(x,y)$.

Let $\lambda^2$ denote the normalized Lebesgue measure in $\mathcal F(M)^2$. For each $(R,\delta)$ consider the measure $\mu$ in  $ \mathcal F(M)^2$ that is obtained by integrating $d\mu$ defined in \cite[page 849]{hamenstadt} along the fiber $\mathcal F(M)^3$. It is absolutely continuous with respect to $\lambda^2$ and its Radon-Nikodym derivative has order $1+O(1/R)$. For each $P\in P(R,\delta)$ set $h(P)=\mu(\hat P^{-1}(P))$. In Proposition 7.3 of \cite{hamenstadt} we see that the surface $\Sigma_{\varepsilon}$ is constructed by attaching elements of $P(R,\delta)$ and if $n_P$ denotes the number of times that $P\in P(R,\delta)$ appears  in $\Sigma_{\varepsilon}$ then
\begin{equation}\label{h.lower.bound.claim}
n_P\geq \frac{h(P)}{2}\sum_{Q\in  P(R,\delta)}n_Q=\frac{h(P)}{2}2(g-1),
\end{equation}
where $g$ is the genus of $\Sigma_{\varepsilon}$.

The claim below and Lemma \ref{measures.same} proves Theorem \ref{counting.thm} (b).
\medskip

\noindent{\bf Claim:} {\em For every geodesic ball $B\subset M$ we have 
$\liminf_{i\to\infty}\nu_i(B)>0$.\\
}
It suffices to consider the case where each $\Sigma_i$ is one of the surfaces constructed in \cite{hamenstadt} (the finite covering case follows immediately). Choose  $\tilde B\subset B$ a smaller concentric ball in $M$ so that if $U$ denotes its preimage in $\mathcal F(M)$ we can find a constant $c_0$ (depending on $B$ and $\tilde B$) so that $\area(T(x)\cap B)\geq 2c_0$ for all $x\in U$. Thus for all $R$ large and $\delta$ small we have that $$\area(P(x,y)\cap B)\geq c_0$$ for all $x\in U$, $y\in \mathcal F(M)$. Set 
$$\Lambda=\{P\in P(R,\delta): \hat P^{-1}(P)\cap (U\times \mathcal F(M))\neq \emptyset \}.$$
Each time $P\in \Lambda$ choose its geometric representative to be $P(x,y)$ where $x\in U$. Therefore for all $i$ sufficiently large we have using \eqref{h.lower.bound.claim} that
\begin{align*}
\area(\Sigma_i\cap B)&\geq \sum_{P\in \Lambda}n_P\,\area(P\cap B)\geq c_0(g-1)\sum_{P\in \Lambda}h(P)\\
&\geq c_0(g-1)\mu(U\times \mathcal F(M))\geq c_0\mu(U\times \mathcal F(M))\area(\Sigma_i)/5\pi.\\ 
\end{align*}
This proves the claim.

\end{proof}

\section{Asymptotic inequality}\label{asymp.section}

Consider $\{S_i\}_{i\in\N}$ a sequence of minimal essential immersions given by Theorem \ref{counting.thm}, each  inducing a surface group $G_i<\Gamma$. For each $i\in\N$ consider as well the minimal essential immersion $\Sigma_i\subset M$ that minimizes area with respect to the metric $h$ in the homotopy class of $S_i$ (using \cite{schoen-yau} for instance).

The goal of this section (and the next) is to prove the following result.
\begin{thm}\label{rigidity.thm} Assume the metric $h$ has sectional curvature $\leq -1$. Then 
$$\limsup_{i\to\infty}\frac{\text{area}_h(\Sigma_i)}{\text{area}(S_i)}\leq 1.$$
If equality holds then the metric $h$ is hyperbolic.
\end{thm}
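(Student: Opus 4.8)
The plan is to compare areas through the convex-hull control from Theorem~\ref{convex.hull.main.thm} together with a monotonicity/Gauss--Bonnet argument. First I would record the input: by Theorem~\ref{counting.thm}, each $S_i$ is minimal with respect to $\bar h$, satisfies $\area(S_i)=\area(\Pi_i)$, and has $\sup_{S_i}|A|\to 0$; moreover the lifts $D_i$ converge smoothly to a totally geodesic disc. Since $S_i$ is totally geodesic in the limit, $\area(S_i)\simeq 4\pi(g_i-1) = -2\pi\chi(S_i)$ up to lower-order corrections coming from the small second fundamental form (by Gauss--Bonnet for $\bar h$, $\area(S_i) = -2\pi\chi(S_i) + \int_{S_i}\tfrac12|A|^2\,dA$, using that $S_i$ is minimal so $K_{S_i} = -1 - \tfrac12|A|^2$). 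So $\area(S_i) = -2\pi\chi(S_i)(1+o(1))$.

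Next I would bound $\area_h(\Sigma_i)$ from above by $-2\pi\chi(S_i)$. The key point is that $h$ has sectional curvature $\le -1$, so by Gauss's equation the intrinsic Gauss curvature of the minimal surface $\Sigma_i$ (with respect to $h$) satisfies $K_{\Sigma_i} \le -1$ (the ambient sectional curvature term is $\le -1$, the Gauss correction $-\tfrac12|A_h|^2 \le 0$). Applying Gauss--Bonnet to $(\Sigma_i,h)$ gives
$$\area_h(\Sigma_i) \le -2\pi\chi(\Sigma_i) = -2\pi\chi(S_i),$$
since $\Sigma_i$ is homotopic to $S_i$ and hence has the same Euler characteristic. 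Combining with the previous paragraph yields $\dfrac{\area_h(\Sigma_i)}{\area(S_i)} \le \dfrac{-2\pi\chi(S_i)}{-2\pi\chi(S_i)(1+o(1))} \to 1$, which is the inequality. Note this step does not even need Theorem~\ref{convex.hull.main.thm} — it is pure Gauss--Bonnet, as the paper's introduction hints.

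For the equality case — if $\limsup_i \area_h(\Sigma_i)/\area(S_i) = 1$ — I expect the main work to lie here, and this is where Theorem~\ref{convex.hull.main.thm} (and later sections) enter. Equality forces, along a subsequence, that both Gauss--Bonnet inequalities are asymptotically sharp: $\int_{\Sigma_i}(|A_h|^2 + 2(\mathrm{sec}_h + 1))\,dA_h \to 0$ in a suitably normalized sense, so in the limit $\Sigma_i$ becomes totally geodesic and the ambient sectional curvature along its tangent planes approaches $-1$. Using Theorem~\ref{convex.hull.main.thm}, $D_i$ and $\Omega_i$ stay within bounded Hausdorff distance; combined with the smooth convergence of $D_i$ to a totally geodesic hyperbolic disc $D$ and the curvature-concentration vanishing, one extracts (after applying $\Gamma$ to keep things in a compact set, and passing to subsequences) a limiting surface which is totally geodesic for $h$ and along which $h$ has curvature $-1$. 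The hard part will be upgrading this ``for one surface through one point'' statement to ``for every $(p,v)$'': this is exactly where the equidistribution of the $\mu_i$ from Theorem~\ref{counting.thm}(b) — positivity of the limiting measure $\nu$ on every open set — is used to guarantee the limiting totally geodesic discs pass through a dense set of $Gr_2(M)$, and then the Ratner--Shah rigidity (Theorem~\ref{ratner-shah}, via Theorem~\ref{mmo-lemma}) together with the Brin--Gromov ergodicity of the frame flow forces $\mathrm{sec}_h \equiv -1$ everywhere, i.e.\ $h$ is hyperbolic. I would defer the detailed execution of this last part to Section~\ref{asymp.section} and the subsequent section, as the statement promises.
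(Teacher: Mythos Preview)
Your treatment of the inequality is correct and is exactly the paper's argument: Gauss--Bonnet on $(\Sigma_i,h)$ with $K_{\Sigma_i}\le -1$ gives $\area_h(\Sigma_i)\le 4\pi(g_i-1)$, and Gauss--Bonnet on $(S_i,\bar h)$ with $\sup_{S_i}|A|\to 0$ gives $\area(S_i)=4\pi(g_i-1)(1+o(1))$.

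For the equality case your ingredient list is right but the mechanism you describe is not quite how the paper proceeds, and one key tool is missing. You suggest that the equidistribution in Theorem~\ref{counting.thm}(b) is used to produce limiting totally geodesic $h$-discs through a \emph{dense} set of $Gr_2(M)$ directly. That would require controlling simultaneously \emph{where} the lifted discs sit and \emph{where} the defect $f_i=|A|^2-(K_{12}+1)$ is small; equidistribution only addresses the first. The paper's Section~\ref{section.technical} does this coupling: the positivity of $\nu$ on every open set (Proposition~\ref{properties.main}(c)) is combined with the averaged smallness of $f_i$ (Proposition~\ref{properties2.main}) to show that for every compact $K\subset\Delta$ there are conjugates hitting $K$ with $f_i$ small on large balls. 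Theorem~\ref{mmo-lemma} then yields a \emph{single} circle $\gamma\in\mathcal C$ with dense $\Gamma$-orbit and a single totally geodesic hyperbolic $h$-disc $\Omega$ with $\partial_\infty\Omega=\gamma$ (Proposition~\ref{plateau}).

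The step you are missing is how to pass from this one disc $\Omega$ to density in the unit tangent bundle for $h$: the paper invokes Gromov's geodesic-conjugacy theorem, which gives a homeomorphism $T:S_1M(\bar h)\to S_1M(h)$ sending geodesics to geodesics and preserving endpoints at infinity. Since $\partial_\infty D=\partial_\infty\Omega=\gamma$ and both are totally geodesic, $T$ carries $S_1^D(M)$ onto $S_1^\Omega(M)$; density of the former (Ratner--Shah for $\bar h$) then gives density of the latter, after which the Brin--Gromov frame flow argument finishes as you indicate. Without this transfer step the argument does not close, since Ratner--Shah is available only for the hyperbolic metric, not for $h$.
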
 
\begin{proof}
Let $g_i$ denote the genus of $S_i$.  From Gauss equation we have that

\begin{equation}\label{gauss.eqn}
\area_{h}(\Sigma_i)=4\pi(g_i-1)+\int_{\Sigma_i}(K_{12}+1) dA_h-\frac{1}{2}\int_{\Sigma_i}|A|^2dA_h,
\end{equation}
where $K_{12}(x)$ is the ambient sectional curvature of  $T_x\Sigma_i$. Using the fact that $K_{12}\leq -1$ and \eqref{seppi.A} we have that
$$\limsup_{i\to\infty}\frac{\text{area}_h(\Sigma_i)}{\text{area}(S_i)}\leq 1$$
with equality implying that
\begin{equation*}\label{gauss.eqn.limit}
\lim_{i\to\infty}\frac{1}{\text{area}_h(\Sigma_i)}\int_{\Sigma_i} |A|^2-(K_{12}+1) dA_h=0.
\end{equation*}
Consider the nonnegative smooth function  $f_i=|A|^2-(K_{12}+1)$ on $\Sigma_i$. We then have
$$
\lim_{i\to\infty}\frac{1}{\text{area}_h(\Sigma_i)}\int_{\Sigma_i}|f_i|dA_h=0.
$$

In Section \ref{section.technical} we show (see Corollary \ref{cor.technical.main})  the existence of  a group $H_i<\Gamma$ conjugate to $G_i$ so that if $D_i$, $\Omega_i$ denote, respectively, the lifts of $S_i$ and $\Sigma_i$ to $\H^3$ that are preserved by $H_i$ we have, after passing to a subsequence, 
\begin{itemize}
\item[(i)] $\Lambda(H_i)$ converges  in Hausdorff distance, as $i\to\infty$, to $\gamma\in \mathcal C$  with $\Gamma\gamma$ dense in $\mathcal C$ and
\item[(ii)]  for all $R>0$
\end{itemize}
\begin{equation}\label{limit.zero}
\lim_{i\to\infty}\int_{\Omega_i\cap B_R(p)}|f_i|dA_h=0.
\end{equation}
From (i) we have that all $D_i$'s must intersect a compact set in $\H^3$ and so  Proposition \ref{compactness-thm} implies that $\{D_i\}_{i\in\N}$ converges to a totally geodesic disc $D$ for the hyperbolic metric with $\partial_{\infty}D=\gamma.$

Because the ambient curvature is negative, $\Sigma_i$ is negatively curved and so, in virtue of being essential, its injectivity radius has a uniform lower bound for all $i\in\N$. Hence, standard stability estimates imply that  the second fundamental form of $\Sigma_i$  is uniformly bounded for all $i\in \N$ along with all its derivatives. As a result we have from \eqref{limit.zero} that
\begin{equation}\label{limit.zero.pointwise}
\lim_{i\to\infty}\sup\{|A|(x)+|K_{12}(x)+1|:x\in \Omega_i\cap B_R(p)\}=0,\quad\text{all }R>0.
\end{equation}
 \begin{prop}\label{plateau} There is a totally geodesic disc $\Omega$ in $(B^3,h)$ with $\partial_{\infty}\Omega=\gamma$ and such that the sectional curvature of $T_x\Omega$ is $-1$ for all $x\in \Omega$.
 \end{prop}
 \begin{proof}
From Theorem \ref{convex.hull.main.thm} we obtain the existence of a compact set $K$ that intersects  $\Omega_i$ for  all $i\in\N$. Choose $x_i\in\Omega_i\cap K$ and denote by $B_R^i(x_i)\subset \Omega_i$  the intrinsic ball of radius $R$ centered at $x_i$. Note that $\Omega_i$ is negatively curved and thus $B_R^i(x_i)$ is diffeomorphic to a disc for all  $i$ sufficiently large. Standard compactness of minimal surfaces with uniform bounds on the second fundamental form gives the existence of a complete minimal surface $\Omega\subset B^3$ so that, after passing to a subsequence, intrinsic discs  in $\Omega_i$ centered at $x_i$ converge strongly to intrinsic discs in $\Omega$. Furthermore, from \eqref{limit.zero.pointwise}, we have that $\Omega$ is totally geodesic and the sectional curvature of $T_x\Omega$ is $-1$ for all $x\in \Omega$. As a result $\Omega$ is diffeomorphic to a disc. We have from Proposition \ref{convex.core} that $\Omega_i\subset C_h(\Lambda(H_i))$  for all $i$ sufficiently large and so $\partial_{\infty}\Omega\subset \gamma$. On the other hand, $\partial_{\infty}\Omega$ is homeomorphic to a circle and so it must be equal to $\gamma$.
 \end{proof}

Consider the following circle bundles
$$S_1^D:=\{(p,v):p\in D, v\in T_pD, \bar h(v,v)=1\}$$
and
$$S_1^\Omega:=\{(p,v):p\in \Omega, v\in T_p\Omega, h(v,v)=1\}.$$
Denote by $S_1M(\bar h)$ and $S_1M(h)$ the unit tangent bundle of $M$ with respect to $\bar h$ and $h$ respectively, and  let $S_1^D(M)\subset S_1M(\bar h)$, $S_1^\Omega(M)\subset S_1M(h)$ denote, respectively, the  projection to $S_1M(\bar h)$ and $S_1M(h)$  of $S_1^D$ and $S_1^\Omega$. From (i) we have that $S_1^D(M)$ is dense in $S_1M(\bar h)$.

We  now argue that the sectional curvature of every $2$-plane  in $(M,h)$  is $-1$.

\noindent{\bf Claim: }{\em For every $(p,v)\in S_1(M)(h)$ there is a totally geodesic hyperbolic disc $\Omega_{(p,v)}$ in $(B^3,h)$ whose projection in $M$ contains the geodesic passing through $p$ with direction $v$.
}
\medskip

From the  geodesic rigidity proven in Gromov \cite{gromov} there is a homeomorphism  $T$ from  $S_1M(\bar h)$ to $S_1M(h)$  that maps geodesics onto geodesics, meaning that if $\gamma$ is a geodesic in $(M,\bar h)$, there is a geodesic $\sigma$ in $(M,h)$ so that for all $t\in \R$ there is $s\in \R$ so that $T(\gamma(t),\gamma'(t))=(\sigma(s),\sigma'(s))$. Moreover,  from its proof (see \cite[Theorem 2.12]{knieper} for instance), $T$ can be chosen so that if $\gamma(+\infty), \gamma(-\infty)\in S^2_{\infty}$ are the asymptotes of $\gamma$, then $\sigma$ has the same asymptotes in  $S^2_{\infty}$. Thus, from the fact that $\partial_{\infty}\Omega=\partial_{\infty}D$ and that both $D$ and $\Omega$ are totally geodesic, we have that $T$ is also a homeomorphism from $S_1^D(M)$  onto $S_1^\Omega(M)$. Therefore, because $S_1^D(M)$ is dense in $S_1M(\bar h)$ we obtain that $S_1^\Omega(M)$ is also dense in $S_1M(h)$. As a result, for every $(p,v)\in S_1M(h)$ we can find  a sequence of points $\{\omega_i\}_{i\in\N}$ in $S_1^\Omega$ whose projection to  $S_1M(h)$ converges to $(p,v)$ and so applying the same reasoning as in Proposition \ref{compactness-thm} to a suitable sequence $\{\phi_i(\Omega)\}_{i\in\N}$, where $\phi_i\in\Gamma$,  we obtain  a totally geodesic hyperbolic disc $\Omega_{(p,v)}\subset B^3$  whose projection in $M$ contains the geodesic passing through $p$ with direction $v$.
\medskip

Recalling the discussion in Section \ref{subsection.geodesic.planes}, choose $(p,(e_1,e_2,e_3))\in \mathcal F(M)(h)$ whose orbit  under the frame flow
$$F_t((p,(e_1,e_2,e_3)))=(\gamma(t),(\gamma'(t),e_2(t),e_3(t))), \quad t\geq 0$$ is dense in $\mathcal F(M)(h)$. We abuse notation and  denote the lift of $\gamma$ to $B^3$ by $\gamma$. By applying a rotation if necessary, we can prescribe the vector $e_2$ to be any unit vector orthogonal to $e_1$ that we still obtain a dense orbit  in $\mathcal F(M)(h)$.   Hence we assume that $\{e_1,e_2\}$ span $T_{\gamma(0)}\Omega_{(p,e_1)}$, in which case the fact that $\Omega_{(p,e_1)}$ is totally geodesic implies that $\text{span}\{\gamma'(t),e_2(t)\}=T_{\gamma(t)}\Omega_{(p,e_1)}$  for all $t\geq 0$. Therefore the set of $2$-planes with sectional curvature $-1$ is dense in $Gr_2(M)$ and this implies the desired result.

\end{proof}
\section{Nearly totally geodesic minimal surfaces}\label{section.technical}

We continue assuming the set up of the last section. Namely we have   a sequence of minimal essential immersions $\{S_i\}_{i\in\N}$ given by Theorem \ref{counting.thm}, each  inducing a surface group $G_i<\Gamma$ and lifting to a disc $D_i\subset \H^3$ that is preserved by $G_i$.

 For each $i\in\N$ consider as well the minimal essential immersion $\Sigma_i\subset M$ that minimizes area with respect to the metric $h$ in the homotopy class of $S_i$ and such that  there is a continuous function $f_i:\Sigma_i\rightarrow \R$ so that
\begin{equation}\label{H2}
\lim_{i\to\infty}\frac{1}{\text{area}_h(\Sigma_i)}\int_{\Sigma_i}|f_i|dA_h=0.
\end{equation}
Let $\Omega_i$ denote the disc lifting $\Sigma_i$ to $B^3$ that is preserved by $G_i<\Gamma$, $i\in\N$. To make notation easier, it is understood that the function $f_i$ on $\phi(\Omega_i)$, $\phi\in\Gamma$, means $f_i\circ\pi_{\Omega_i}\circ\phi^{-1}$, where $\pi_{\Omega_i}$ is  the projection from $\Omega_i$ to $\Sigma_i$.

Fix $p\in \H^3$, consider  for every $\varepsilon, R>0$
$$F_i(\varepsilon, R)=\left\{\phi\in\Gamma: \int_{\phi(\Omega_i)\cap B_R(p)}|f_i|dA_h\leq \varepsilon\right\}$$
and define $\mathcal L\subset\mathcal C$  as
\begin{multline*}
\mathcal L=\{\gamma\in\mathcal C: \exists\,\phi_i\in F_i(\varepsilon_i, R_i)\text{ with }\varepsilon_i\to 0, R_i\to\infty \text{ so that,} 
\\
\text{after passing to a subsequence, }\Lambda(\phi_iG_i\phi_i^{-1})\text{ converges to }\gamma\}.
\end{multline*}
The goal of this section is to show
\begin{thm}\label{technical.main.thm} $\mathcal L=\mathcal C$ and so there is $\gamma\in \mathcal L$ so that $\Gamma\gamma$ is dense in $\mathcal C$.
\end{thm}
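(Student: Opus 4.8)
The plan is to show $\mathcal{L} = \mathcal{C}$ by combining the structural result of Theorem \ref{mmo-lemma} with a counting/pigeonhole argument based on the equidistribution properties of the surfaces $S_i$ established in Theorem \ref{counting.thm}. First I would check that $\mathcal{L}$ is closed and $\Gamma$-invariant. Closedness follows by a diagonal argument: if $\gamma^{(k)} \to \gamma$ with each $\gamma^{(k)} \in \mathcal{L}$, then for each $k$ there is a sequence $\phi^{(k)}_i \in F_i(\varepsilon^{(k)}_i, R^{(k)}_i)$ whose conjugated limit sets converge to $\gamma^{(k)}$; choosing $i(k)$ growing fast enough one extracts a single sequence witnessing $\gamma \in \mathcal{L}$. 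For $\Gamma$-invariance, note that if $\phi_i \in F_i(\varepsilon_i, R_i)$ witnesses $\gamma \in \mathcal{L}$ and $\psi \in \Gamma$, then $\psi\phi_i$ lands in $F_i(\varepsilon_i, R_i - d(p,\psi^{-1}p))$ up to controlling the integral over the shifted ball, so $\psi(\gamma) \in \mathcal{L}$ by the conformal action on $\mathcal{C}$; since $\Lambda(\psi\phi_i G_i \phi_i^{-1}\psi^{-1}) = \psi(\Lambda(\phi_i G_i \phi_i^{-1}))$ this is straightforward.

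Having established that $\mathcal{L}$ is closed and $\Gamma$-invariant, I would argue by contradiction: if $\mathcal{L} \neq \mathcal{C}$, then by Theorem \ref{ratner-shah} no element of $\mathcal{C} \setminus \mathcal{L}$ can... wait — more carefully, I need to know no element of $\mathcal{L}$ has dense orbit, so that Theorem \ref{mmo-lemma} applies. If some $\gamma \in \mathcal{L}$ had $\Gamma\gamma$ dense in $\mathcal{C}$, we would already be done (that is exactly the second assertion). So assume not; then Theorem \ref{mmo-lemma} forces every $\gamma \in \mathcal{L}$ to be isolated in $\mathcal{C}$ with $C(\gamma)$ projecting to a closed totally geodesic surface in $M$. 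In particular $\mathcal{L}$ is a discrete $\Gamma$-invariant subset of $\mathcal{C}$, consisting of finitely many $\Gamma$-orbits of circles bounding closed totally geodesic surfaces (each such orbit is discrete and closed; since there are only countably many closed totally geodesic surfaces and the construction produces surfaces of unbounded genus, $\mathcal{L}$ is "small").

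The crux — and the main obstacle — is then a counting contradiction. The surfaces $S_i$ come from Theorem \ref{counting.thm}, and by part (b) their associated Radon measures $\mu_i$ converge to a measure $\nu$ positive on every open set; equivalently (via Lemma \ref{measures.same}) the area of $S_i$ is spread out over all of $M$ and, in the universal cover, over all of $\H^3$. The hypothesis \eqref{H2} says $f_i$ is small in $L^1$ relative to area. The key point is that, for fixed large $R$ and small $\varepsilon$, the proportion of translates $\phi \in \Gamma$ with $|\phi| \le n$ for which $\phi \in F_i(\varepsilon, R)$ tends to $1$ as $i \to \infty$: indeed, $\sum_{|\phi|\le n} \int_{\phi(\Omega_i)\cap B_R(p)} |f_i|\, dA_h$ is, up to bounded overlap multiplicity, comparable to $\int_{\Omega_i \cap B_{cn+c}(p)} |f_i|\, dA_h$ which by \eqref{H2} and the lower area bound (from part (b) / equidistribution) is $o(\text{area}(\Omega_i \cap B_{cn+c}(p)))$, while the number of relevant $\phi$ grows comparably to that area. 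Hence "most" translates lie in $F_i(\varepsilon,R)$. Now fix $\gamma_0 \in \mathcal{C}\setminus\mathcal{L}$ (nonempty by assumption) and, using density of $\Gamma$-orbits coming from the fact that $\partial_\infty D$ for a generic totally geodesic disc has dense orbit — more precisely, using that we can choose the original $S_i$ (a degree-$n$ cover of $\Sigma_\varepsilon$) so the relevant limit sets are not confined to the finite exceptional set $\mathcal{L}$ — one produces, for each $i$, a translate $\phi_i$ with $\phi_i \in F_i(\varepsilon_i,R_i)$ whose conjugated limit set converges to a point of $\mathcal{C}\setminus\mathcal{L}$, contradicting the definition of $\mathcal{L}$. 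Alternatively, and more robustly: since most translates lie in $F_i(\varepsilon,R)$ while $\mathcal{L}$ captures only those whose conjugated limit sets accumulate on the discrete exceptional set, one shows the conjugated limit sets $\Lambda(\phi_i G_i \phi_i^{-1})$ for typical $\phi_i \in F_i$ must (after subsequence) converge to something, and that limit — because $\Lambda(G_i) \to \partial_\infty D_i$ and the $D_i$ equidistribute in $Gr_2(\H^3)$ by Proposition \ref{compactness-thm} — ranges over a dense subset of $\mathcal{C}$, so $\mathcal{L}$ is dense, hence (being closed) equals $\mathcal{C}$. The delicate part is making the overlap-multiplicity bound for the sum over $|\phi|\le n$ precise using Lemma \ref{cayley.balls} and the uniform injectivity radius and curvature bounds on $\Sigma_i$, together with extracting the subsequential limit circle and verifying it has dense $\Gamma$-orbit — which, once $\mathcal{L}=\mathcal{C}$, is immediate since a generic circle in $\mathcal{C}$ has dense orbit by Theorem \ref{mmo-lemma} applied contrapositively (if every circle had non-dense orbit, all would be isolated, absurd since $\mathcal{C}$ is a connected manifold).
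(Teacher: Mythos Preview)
Your high-level strategy is exactly the paper's: show $\mathcal L$ is closed and $\Gamma$-invariant, assume no element has dense orbit, invoke Theorem~\ref{mmo-lemma} to force every $\gamma\in\mathcal L$ to be isolated with $C(\gamma)$ closed in $M$, and then derive a contradiction from a counting/pigeonhole argument using \eqref{H2} and the equidistribution in Theorem~\ref{counting.thm}(b). The first two steps are fine. The counting step, however, has a genuine gap.

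The condition $\phi\in F_i(\varepsilon,R)$ is a condition on the lift $\phi(\Omega_i)$ of the $h$-minimal surface $\Sigma_i$, whereas the equidistribution input from Theorem~\ref{counting.thm}(b) concerns the $\bar h$-minimal surfaces $S_i$ and their lifts $\phi(D_i)$. You never build the bridge between these two families. The paper does this via Theorem~\ref{convex.hull.main.thm}, which gives a uniform bound $d_H(D_i,\Omega_i)\le R(h,\lambda_0)$; this is what allows one to compare the coset sets $\underline\Gamma^{S_i}=\{\phi G_i:\phi(D_i)\cap\Delta\neq\emptyset\}$ and $\underline\Gamma^{\Sigma_i}=\{\phi G_i:\phi(\Omega_i)\cap\Delta\neq\emptyset\}$ via a map $P_i$ moving each element a bounded Cayley distance and with uniformly bounded fibers. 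Without this step you cannot conclude that ``most lifts of $D_i$ hitting $\Delta$'' correspond to ``lifts of $\Omega_i$ with small $\int|f_i|$''. Relatedly, your sum $\sum_{|\phi|\le n}\int_{\phi(\Omega_i)\cap B_R(p)}|f_i|\,dA_h$ is not comparable to an integral over a single larger ball in $\Omega_i$: distinct cosets $\phi G_i$ give genuinely different discs $\phi(\Omega_i)$, and the correct bookkeeping is over $\Gamma/G_i$ (using that $\Delta_{G_i}=\bigcup_{\underline\phi}\underline\phi^{-1}(\Delta)$ is a fundamental domain for $\H^3/G_i$), not over Cayley balls $\{|\phi|\le n\}$. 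This is how the paper turns \eqref{H2} into $\#\underline L^{\Sigma_i}(\varepsilon,R)/\#\underline\Gamma^{S_i}\to 0$.

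Your contradiction mechanism is also too vague. The clean endgame is: once every $\gamma\in\mathcal L$ is isolated with $C(\gamma)$ closed, the set $\{\gamma\in\mathcal L:C(\gamma)\cap\Delta\neq\emptyset\}$ is \emph{finite}, so one can choose a compact $K\subset\Delta$ disjoint from all such $C(\gamma)$. Theorem~\ref{counting.thm}(b) then gives that a \emph{positive proportion} of cosets in $\underline\Gamma^{S_i}$ have $\underline\phi(D_i)\cap K\neq\emptyset$; combined with the fact (once the bridge above is built) that almost every coset lies in $\underline\Gamma^{S_i}(\varepsilon_i,R_i)$, you get $\phi_i\in\Gamma^{S_i}(K)\cap F_i(\varepsilon_i,R_i-c)$, and Proposition~\ref{compactness-thm} applied to $\phi_i(D_i)$ yields $\gamma\in\mathcal L$ with $C(\gamma)\cap K\neq\emptyset$, the desired contradiction. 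Your alternative route (``limits range over a dense subset of $\mathcal C$'') is morally the same statement, but you have not justified it: Proposition~\ref{compactness-thm} does not say the $D_i$ equidistribute in $Gr_2(\H^3)$; the positivity of $\nu$ on open sets is what gives, for each $K$, a positive fraction of lifts meeting $K$, and that is precisely what the coset counting makes rigorous.
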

This result has the following corollary

\begin{cor}\label{cor.technical.main}
There is a conjugate group $H_i=\phi_iG_i\phi_i^{-1}$, $\phi_i\in\Gamma$, so that, after passing to a subsequence, 
\begin{itemize}
\item[a)] $\Lambda(H_i)$ converges  in Hausdorff distance, as $i\to\infty$, to $\gamma\in \mathcal C$  with $\Gamma\gamma$  dense in $\mathcal C$ and
\item[b)]  for all $R>0$
$$\lim_{i\to\infty}\int_{\phi_i(\Omega_i)\cap B_R(p)}|f_i|dA_h=0.$$
\end{itemize}
\end{cor}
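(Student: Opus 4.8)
The plan is to prove Theorem \ref{technical.main.thm} and then derive Corollary \ref{cor.technical.main} by a diagonal argument. First I would verify the elementary structural facts about $\mathcal L$. It is immediate from the definition that $\mathcal L$ is $\Gamma$-invariant: if $\gamma = \lim \Lambda(\phi_i G_i \phi_i^{-1})$ with $\phi_i \in F_i(\varepsilon_i,R_i)$, then for fixed $\psi \in \Gamma$ one has $\psi(\gamma) = \lim \Lambda((\psi\phi_i) G_i (\psi\phi_i)^{-1})$, and since $\psi$ moves $p$ a bounded distance, $\psi\phi_i \in F_i(\varepsilon_i, R_i - d(p,\psi(p)))$ eventually, so $\psi(\gamma)\in\mathcal L$. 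One also checks $\mathcal L$ is nonempty: using \eqref{H2} and an averaging/pigeonhole argument over a fundamental domain (via Lemma \ref{cayley.balls} to count the cosets $\phi$ whose translate $\phi(\Omega_i)$ meets a ball $B_R(p)$, whose number is controlled by $\area_h(\Sigma_i)\simeq \area(S_i)\simeq 4\pi g_i$ up to the quasi-isometry constants), for each $i$ there is a set of $\phi$'s of positive proportion lying in $F_i(\varepsilon_i, R_i)$ with suitable $\varepsilon_i\to 0$, $R_i\to\infty$; conjugating so that $\phi_i(\Omega_i)$ meets a fixed compact set and extracting a limit of the Jordan curves $\Lambda(\phi_i G_i\phi_i^{-1})$ (these converge since, by Proposition \ref{compactness-thm} applied to the companion discs $D_i$, the corresponding totally geodesic discs converge) produces an element of $\mathcal L$. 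The key closedness property — that $\mathcal L$ is closed in $\mathcal C$ — follows by a diagonal argument: if $\gamma^{(k)}\in\mathcal L$ with $\gamma^{(k)}\to\gamma$, pick for each $k$ an index $i(k)$ and $\phi_{i(k)}\in F_{i(k)}(1/k, k)$ with $\Lambda(\phi_{i(k)}G_{i(k)}\phi_{i(k)}^{-1})$ within $1/k$ of $\gamma^{(k)}$, and relabel.

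Having established that $\mathcal L\subset\mathcal C$ is closed and $\Gamma$-invariant, I would invoke Theorem \ref{mmo-lemma}. Suppose, for contradiction, that no element of $\mathcal L$ has dense $\Gamma$-orbit in $\mathcal C$. Then Theorem \ref{mmo-lemma} forces every $\gamma\in\mathcal L$ to be isolated in $\mathcal L$ and to have $C(\gamma)$ projecting to a \emph{closed} totally geodesic surface in $M$. But $\mathcal L$ is built from the Jordan curves of the genuinely quasi-Fuchsian (non-Fuchsian) groups $G_i$ coming from the Kahn--Markovic / Hamenstädt construction, which are equidistributing in the frame bundle; the limiting circle $\gamma$ cannot be the boundary circle of one of the (at most finitely many, if any) closed totally geodesic surfaces in $M$. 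More precisely, I would argue that $M$ has at most finitely many — possibly zero — homotopy classes of closed totally geodesic surfaces with area below any fixed bound, and a fixed $\gamma$ with $C(\gamma)$ closed has a discrete (non-accumulating) $\Gamma$-orbit, whereas the equidistribution in Theorem \ref{counting.thm}(b) and the near-totally-geodesic convergence in Proposition \ref{compactness-thm} show the limit circles of the $\phi_i G_i\phi_i^{-1}$ sample the Grassmannian densely — so if every limit circle had $C(\gamma)$ closed, their $\Gamma$-orbits would have to fill $\mathcal C$ up to closure anyway, and then by Theorem \ref{ratner-shah} a limit of such orbits would give a dense orbit, contradiction. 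So the contradiction hypothesis fails, meaning some $\gamma\in\mathcal L$ has $\Gamma\gamma$ dense in $\mathcal C$; then by another diagonal/isolation argument (density of dense-orbit circles, cf. Theorem 4.1 of \cite{mmo}) one upgrades this to $\mathcal L=\mathcal C$.

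For the corollary: fix $\gamma_0\in\mathcal L$ with $\Gamma\gamma_0$ dense in $\mathcal C$. By the definition of $\mathcal L$ there are, for each $k\in\N$, an index $i_k$ (which we may take strictly increasing) and $\phi_{i_k}\in F_{i_k}(1/k, k)$ with $d_H(\Lambda(\phi_{i_k}G_{i_k}\phi_{i_k}^{-1}), \gamma_0) < 1/k$. For the indices $i$ not of the form $i_k$, choose $\phi_i\in\Gamma$ arbitrarily; then pass to the subsequence $\{i_k\}$. Along this subsequence, set $H_i = \phi_i G_i\phi_i^{-1}$: then $\Lambda(H_i)\to\gamma_0$ in Hausdorff distance with $\Gamma\gamma_0$ dense, giving (a), and for every fixed $R>0$ we have $k\ge R$ eventually so $\int_{\phi_{i_k}(\Omega_{i_k})\cap B_R(p)}|f_{i_k}|\,dA_h \le \int_{\phi_{i_k}(\Omega_{i_k})\cap B_k(p)}|f_{i_k}|\,dA_h \le 1/k \to 0$, giving (b).

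The main obstacle I anticipate is the nonemptiness/averaging step: turning the global smallness \eqref{H2} of $\int_{\Sigma_i}|f_i|$ into a \emph{positive-density} set of translates $\phi$ with $\int_{\phi(\Omega_i)\cap B_R(p)}|f_i|$ small, uniformly as $R\to\infty$ and $\varepsilon\to 0$. This requires carefully counting how many cosets $\phi$ have $\phi(\Omega_i)$ meeting $B_R(p)$ — controlled via Lemma \ref{cayley.balls}, the quasi-Fuchsian fundamental-domain description, and the area bound $\area_h(\Sigma_i)\le C g_i$ — together with the multiplicity with which a given point of $\Sigma_i$ is covered. A Fubini/Markov-inequality argument over this counting should do it, but making the dependence of $R_i$ and $\varepsilon_i$ on $i$ work out (so that both $\to$ their limits) is the delicate bookkeeping. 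A secondary subtlety is checking that $\mathcal L$, defined via convergence of \emph{Jordan curves} $\Lambda(H_i)$, actually lands in $\mathcal C$ (geometric circles) — this uses Proposition \ref{compactness-thm}, where the companion minimal discs $D_i$ (with curvature $\to 0$) force the limit to be totally geodesic and hence the limiting boundary to be a round circle.
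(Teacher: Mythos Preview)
Your overall architecture matches the paper's: establish that $\mathcal L$ is closed and $\Gamma$-invariant, invoke Theorem~\ref{mmo-lemma}, and derive a contradiction; then extract the corollary by a diagonal argument exactly as you describe. The corollary derivation is correct (the paper gives no separate proof of it). However, your contradiction argument for Theorem~\ref{technical.main.thm} has a genuine gap. The decisive claim is not merely that $\mathcal L$ is nonempty, nor that ``the limit circles sample the Grassmannian densely'' in some informal sense, but that \emph{for every compact $K\subset\Delta$} there exists $\gamma\in\mathcal L$ with $C(\gamma)\cap K\neq\emptyset$. The paper obtains this by an explicit intersection-of-densities argument: Proposition~\ref{properties.main}(c) uses the equidistribution of Theorem~\ref{counting.thm}(b) to show that the cosets $\underline\phi$ with $\underline\phi(D_i)\cap K\neq\emptyset$ have asymptotic density at least $\beta(K)>0$ in $\underline\Gamma^{S_i}$, while Proposition~\ref{properties2.main} (the Markov-inequality step you anticipate) shows that cosets landing in $F_i(\varepsilon,R)$ have asymptotic density $1$. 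Hence the two sets must intersect for large $i$, and Lemma~\ref{technical.main.lemm} produces the required $\phi_i$. Your sketch never pins down this intersection, and your first attempt at the contradiction (that a limit of genuinely quasi-Fuchsian limit sets ``cannot'' be the boundary circle of a closed totally geodesic surface) is simply false---Proposition~\ref{compactness-thm} says precisely that the limit is a round circle, and nothing a priori prevents $C(\gamma)$ from projecting to a closed surface.

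There is a second missing ingredient: the equidistribution in Theorem~\ref{counting.thm}(b) is for the \emph{hyperbolic} minimal surfaces $S_i$ (lifts $D_i$), whereas the set $F_i(\varepsilon,R)$ is defined via integrals over the \emph{$h$-minimal} lifts $\Omega_i$. To make the intersection argument work one must pass between $\Gamma^{S_i}$ and $\Gamma^{\Sigma_i}$, and for this the paper invokes Theorem~\ref{convex.hull.main.thm} (uniform bound on $d_H(D_i,\Omega_i)$) to conclude $\mathcal d_H(\Gamma^{S_i},\Gamma^{\Sigma_i})\le n$ (Proposition~\ref{properties.main}(a)); this is what allows the construction of the map $P_i:\Gamma^{S_i}\to\Gamma^{\Sigma_i}$ with bounded fibers. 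Your proposal never mentions this bridging step, and without it the averaging over $\Omega_i$-translates and the equidistribution of $D_i$-translates do not talk to each other.
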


\begin{proof}[Proof of Theorem \ref{technical.main.thm}]
We start by showing
\begin{lemm}The set $\mathcal L$ is closed and $\Gamma$-invariant.
\end{lemm}
\begin{proof}
The fact that it is closed follows by extracting a diagonal subsequence.

With $\psi\in\Gamma$, set $\alpha=d(p,\psi(p))$. Using the fact that $\psi^{-1}(B_{R-\alpha}(p))\subset B_R(p)$ the reader can check that, for all $R>0$ and all $\varepsilon>0$, 
\begin{equation}\label{inclusion.lemm}
 \phi\in F_i(\varepsilon,R)\implies \psi\phi\in F_i(\varepsilon,R-\alpha).
\end{equation}
Combining this with the fact that $\psi(\Lambda(H))=\Lambda(\psi H\psi^{-1})$ for every discrete subgroup $H\subset \Gamma$, follows at once that if $\gamma\in\mathcal C$ then $\psi(\gamma)\in\mathcal C$.
\end{proof}
Hence, it suffices to find $\gamma\in \mathcal L$ so that $\Gamma\gamma$ is dense in $\mathcal C$. Before we provide the details we describe first the general idea. The key step is to show that for every compact set $K\subset\H^3$ there is $\gamma\in\mathcal L$ so that $C(\gamma)$ intersects $K$. Indeed, if no dense orbit exists then every point in $\mathcal L$ is isolated (Theorem \ref{mmo-lemma}) and so we  can find a compact set $K$ so that $C(\gamma)$ never intersects $K$ for all $\gamma\in\mathcal L$, which is a contradiction.

Consider a Dirichlet fundament domain $p\in \Delta$ for $M$ so that $\partial\Delta$ is transverse to both $\phi(D_i)$ and $\phi(\Omega_i)$ for all $\phi\in\Gamma$.   We now consider $\Gamma^{S_i},$ $ \Gamma^{S_i}(K)$ to be the set of all lifts of $S_i$ that intersect $\Delta$, $K$, respectively,  $\Gamma^{\Sigma_i}$ to be the set of all lifts of $\Sigma_i$ that intersect $\Delta$, and $\Gamma^{\Sigma_i}(\varepsilon,R)$  to be the lifts in $\Gamma^{\Sigma_i}$ for which the function $|f_i|$ is small in $L^1$ on a ball of radius $R$. More precisely,
\begin{align*}&\Gamma^{S_i} =\{\phi\in\Gamma: \phi(D_i)\cap \Delta\neq \emptyset\},&  \Gamma^{S_i}(K) =\{\phi\in\Gamma: \phi(D_i)\cap K\neq \emptyset\},\\ 
&\Gamma^{\Sigma_i} =\{\phi\in\Gamma: \phi(\Omega_i)\cap \Delta\neq \emptyset\}, & \Gamma^{\Sigma_i}(\varepsilon,R) =F_i(\varepsilon,R)\cap\Gamma^{\Sigma_i}.
\end{align*}

We want to find $\varepsilon_i\to 0$, $R_i\to\infty$, so that $\Gamma^{S_i}(K)\cap F(\varepsilon_i,R_i)$ is always nonempty.

 The strategy is the following: The  sets described above are all invariant by right multiplication with $G_i$ because $G_i$ preserves both $D_i$ and $\Omega_i$.  We denote the projection of these sets in $\Gamma\setminus G_i$ by $\underline\Gamma^{S_i}$, $\underline\Gamma^{\Sigma_i}$,$\underline \Gamma^{S_i}(K)$, and $\underline \Gamma^{\Sigma_i}(\varepsilon,R)$. 
We will see that, for all $i$ very large,  $\#\underline\Gamma^{S_i}$ is proportional to $ \text{area}(S_i)$, use the fact that $d_H(\Omega_i,D_i)$ is bounded  to conclude that $\Gamma^{S_i}$ and $\Gamma^{\Sigma_i}$ are at a finite Hausdorff distance from each other,  deduce from Theorem \ref{counting.thm} b) that $\frac{\#\underline\Gamma^{S_i}(K)}{\#\underline\Gamma^{S_i}}$ is bounded below away from zero, and use \eqref{H2} to deduce that  $\frac{\#\underline \Gamma^{\Sigma_i}(\varepsilon,R)}{\#\underline\Gamma^{\Sigma_i}}\simeq 1$. Putting all these facts together one can then conclude that $\Gamma^{S_i}(K)\cap F(\varepsilon,R)\neq \emptyset$ for all $i$ very large. We now provide the details.

Referring to the notation set in  Section \ref{fundamental.domain},  we fix a representative $\underline \phi$ for each coset $\phi G_i\in \Gamma\setminus G_i$. Recall that $\nu$ is the measure given by  Theorem \ref{counting.thm} b)
\begin{prop}\label{properties.main}
There are constants $n=n(M,h)\in\N$, $\alpha=\alpha(M)>0$, and $\beta=\beta(\nu,K)>0$  so that for all $i$ sufficiently large
\begin{itemize}
\item[a)] $\mathcal{d}_H(\Gamma^{S_i},\Gamma^{\Sigma_i})\leq n$;
\item[b)] $\alpha^{-1}\text{area}(S_i)\leq \#\underline\Gamma^{S_i}\leq \alpha \text{area}(S_i)$;
\item[c)] $\liminf_{i\to\infty}\frac{\#\underline\Gamma^{S_i}(K)}{\#\underline\Gamma^{S_i}}\geq \beta.$
\end{itemize}
\end{prop}
\begin{proof}
From Theorem \ref{convex.hull.main.thm} we have the existence of $c_1=c_1(h)$ so that $d_H(\phi(D_i),\phi(\Omega_i))\leq c_1$ for all $\phi\in \Gamma$ for all $i$ sufficiently large and from Lemma \ref{cayley.balls} we have the existence of $n=n(M, c_1)\in \N$ so that 
$B_{c_1}(x)\subset \cup_{|\phi|\leq n}\phi(\Delta)$ for all $x\in\Delta.$

Choose $\psi\in\Gamma^{S_i}$ and pick $x\in \psi(D_i)\cap \Delta$. There is $y\in \psi(\Omega_i)\cap B_{c_1}(x)$ and thus some $\phi\in\Gamma$ with $|\phi|\leq n$ for which $\phi^{-1}(\psi(\Omega_i))\cap \Delta\neq\emptyset$. Hence $\Gamma^{S_i}$ is in a $n$-neighborhood of $\Gamma^{\Sigma_i}$  (for the distance $\mathcal d$) and reversing the roles of $\Sigma_i$ and $S_i$  proves a).

Recall from Section  \ref{fundamental.domain} that for all $\psi\in\Gamma$, $\Delta_i=\cup_{\underline\phi\in\Gamma\setminus G_i}\underline \phi^{-1}(\psi(\Delta))$ is a fundamental domain for $\H^3\setminus G_i$. Thus
\begin{equation}\label{fund.expression}
\text{area}\,(S_i)=\sum_{\underline \phi\in\Gamma\setminus G_i}\text{area}\,(\underline \phi(D_i)\cap \psi(\Delta)).
\end{equation}
Choose $A\subset \Gamma$ a finite set so that a neighborhood of radius $1$ of $\Delta$ is contained in the interior of $\cup_{\psi\in A}\psi(\Delta)$.  If $x\in \underline\phi(D_i)\cap\Delta$ then we have from the monotonicity formula that, for some $c_2=c_2(M)$,
$$c_2\leq \text{area}\,(\underline\phi(D_i)\cap B_1(x))\leq\sum_{\psi\in A}\text{area}\,(\underline\phi(D_i)\cap \psi(\Delta))$$
and so, using \eqref{fund.expression},
\begin{multline*}
c_2 \# \underline\Gamma^{S_i}\leq \sum_{\psi\in A}\sum_{\underline \phi\in\underline\Gamma^{S_i}}\text{area}\,(\underline\phi(D_i)\cap \psi(\Delta))\\
\leq \sum_{\psi\in A}\sum_{\underline \phi\in \Gamma\setminus G_i}\text{area}\,(\underline\phi(D_i)\cap \psi(\Delta))=\# A \,\text{area}\,(S_i)
\end{multline*}
Applying Proposition \ref{compactness-thm} to any sequence $\underline\phi_i(D_i)$ with $\underline \phi_i\in\underline\Gamma^{S_i}$ we obtain the existence of a constant $c_3=c_3(M)$ so that for all $i$ sufficiently large we have
\begin{equation}\label{eq.area.estimate}
\text{area}\,(\underline \phi(D_i)\cap\Delta)\leq c_3\quad\text{for all }\underline\phi\in \underline\Gamma^{S_i}.
\end{equation}
Thus
$$\text{area}\,(S_i)=\sum_{\underline\phi\in \Gamma\setminus G_i}\text{area}\,(\underline \phi(D_i)\cap\Delta)
=\sum_{\underline\phi\in \underline\Gamma^{S_i}}\text{area}\,(\underline \phi(D_i)\cap\Delta)\leq c_3\#\,\underline\Gamma^{S_i}$$
and hence for all $i$ sufficiently large
$$ \frac{1}{c_3} \text{area}\,S_i\leq \# \underline\Gamma^{S_i}\leq \frac{\# A}{c_2}\text{area}\,S_i.$$
This proves b).

Let $f\in C^0(M)$ be a  function with $0\leq f\leq 1$ and support contained in $K$. Using \eqref{eq.area.estimate} we have that for all $i$ sufficiently large
$$
\int_{S_i}fdA=\sum_{\underline \phi\in \Gamma\setminus G_i}\int_{\underline \phi(D_i)\cap \Delta}fdA=\sum_{\underline \phi\in \underline\Gamma^{S_i}(K)}\int_{\underline \phi(D_i)\cap \Delta}fdA\leq  c_3\#\underline \Gamma^{S_i}(K)
$$
which means that
$$c_3\frac{\# \underline\Gamma^{S_i}(K)}{\# \underline\Gamma^{S_i}}\geq \frac{1}{\text{area}\,(S_i)}\int_{S_i}fdA$$
and this proves c).

\end{proof}
In light of Proposition \ref{properties.main} a) we can construct, for all $i$ sufficiently large, a map  $P_i:\Gamma^{S_i}\rightarrow \Gamma^{\Sigma_i}$ so that
\begin{itemize}
\item[i)] $\mathcal{d}(P_i(\phi),\phi)\leq n$ for all $\phi\in \Gamma^{S_i}$;
\item[ii)] $P_i(\phi g)=P_i(\phi) g$ for all $\phi\in \Gamma^{S_i}, g\in G_i$.
\end{itemize}
Set $\Gamma^{S_i}(\varepsilon, R)=P_i^{-1}(\Gamma^{\Sigma_i}(\varepsilon, R))$. Because the map $P_i$ is $G_i$-invariant then $\Gamma^{S_i}(\varepsilon, R)$ is also $G_i$-invariant and $P_i$ descends to map $\underline P_i:\underline\Gamma^{S_i}\rightarrow \underline\Gamma^{\Sigma_i}$.
\begin{prop}\label{properties2.main}
 For all $\varepsilon>0, R>0,$
$$\liminf_{i\to\infty}\frac{\#\underline\Gamma^{S_i}(\varepsilon,R)}{\#\underline\Gamma^{S_i}}=1.$$
\end{prop}
\begin{proof}
Due to the fact that both $S_i$ and $\Sigma_i$ minimize area in their homotopy class, there is a constant $c_1=c_1(h)$ so that $$c_1^{-1}\text{area}_h(\Sigma_i)\leq \text{area}(S_i)\leq c_1\,\text{area}_h(\Sigma_i)$$ for all $i\in\N$ and so we deduce from Proposition \ref{properties.main} b) the existence of $c_2=c_2(h,M)$ so that, for all $i$ sufficiently large,
\begin{equation}\label{area.comparison}
c_2^{-1}\text{area}_h(\Sigma_i)\leq \#\underline\Gamma^{S_i}\leq c_2\,\text{area}_h(\Sigma_i).
\end{equation}

Set $L^{\Sigma_i}(\varepsilon, R):=\Gamma^{\Sigma_i}-\Gamma^{\Sigma_i}(\varepsilon,R)$, $i\in\N$ and denote its projection to $\Gamma\setminus G_i$ by $\underline L^{\Sigma_i}(\varepsilon, R)$. From Lemma \ref{cayley.balls} there is $n_R=n_R(R,M)$ so that $$B_R(p)\subset \cup_{|\psi|\leq n_R}\phi(\Delta)$$ and set $c_3=\#\{\psi\in\Gamma:|\psi|\leq n_R\}$. Then, recalling that $$\Delta_i=\cup_{\underline\phi\in\Gamma\setminus G_i}\underline \phi^{-1}(\psi(\Delta))$$ is a fundamental domain for $\H^3\setminus G_i$ for all 
$\psi\in\Gamma$, we have
\begin{multline*}
c_3\int_{\Sigma_i}|f_i|dA_h=\sum_{|\psi|\leq n_R}\sum_{\underline\phi\in \Gamma\setminus G_i}\int_{\underline \phi(\Omega_i)\cap \psi(\Delta)}|f_i|dA_h\\
= \sum_{\underline\phi\in \Gamma\setminus G_i}\sum_{|\psi|\leq n_R}\int_{\underline \phi(\Omega_i)\cap \psi(\Delta)}|f_i|dA_h \geq  \sum_{\underline\phi\in \Gamma\setminus G_i}\int_{\underline \phi(\Omega_i)\cap B_R(p)}|f_i|dA_h\\
\geq \sum_{\underline\phi\in\underline  L^{\Sigma_i}(\varepsilon, R)}\int_{\underline \phi(\Omega_i)\cap B_R(p)}|f_i|dA_h\geq \varepsilon\#\underline L^{\Sigma_i}(\varepsilon, R).
\end{multline*}
Hence
$$\frac{\#\underline L^{\Sigma_i}(\varepsilon, R)}{\text{area}_h(\Sigma_i)}\leq\frac{c_3}{\varepsilon\text{area}_h(\Sigma_i)}\int_{\Sigma_i}|f_i|dA _h$$
and we deduce from \eqref{H2} and \eqref{area.comparison} that
$$\liminf_{i\to\infty}\frac{\#\underline L^{\Sigma_i}(\varepsilon,R)}{\#\underline\Gamma^{S_i}}=0.$$
Set, for all $i$ sufficiently large, $L^{S_i}(\varepsilon,R)=P_i^{-1}(L^{\Sigma_i}(\varepsilon,R))$ which has its projection to $\Gamma\setminus G_i$ satisfying $\underline L^{S_i}(\varepsilon,R)=\underline P_i^{-1}(\underline L^{\Sigma_i}(\varepsilon,R))$.

Define $c_4=\#\{\phi\in \Gamma:|\phi|\leq n\}$, where $n$ is the constant in Proposition \ref{properties.main} a). From property i) of the map $P_i$ we have that $\#P_i^{-1}(\psi)\leq c_4$ for all 
$\psi\in \Gamma^{\Sigma_i}$. Hence from property ii) we deduce that $\# \underline L^{S_i}(\varepsilon,R)\leq c_4\# \underline L^{\Sigma_i}(\varepsilon,R)$ and we obtain
$$\liminf_{i\to\infty}\frac{\#\underline L^{S_i}(\varepsilon,R)}{\#\underline\Gamma^{S_i}}=0.$$
The desired result follows because the reader can check that $\underline \Gamma^{S_i}(\varepsilon,R)=\underline \Gamma^{S_i}-\underline L^{S_i}(\varepsilon,R)$.
\end{proof}

This proposition allows us to choose $\varepsilon_i\to 0$ and $R_i\to\infty$ as $i\to 0$ so that
\begin{equation}\label{full.measure}
\liminf_{i\to\infty}\frac{\#\underline\Gamma^{S_i}(\varepsilon_i,R_i)}{\#\underline\Gamma^{S_i}}=1.
\end{equation}

\begin{lemm}\label{technical.main.lemm} There is a constant $c=c(M,h)$ so that for every compact set $K$ contained in $\Delta$ we can find $\{\phi_i\}_{i\in\N}\subset \Gamma$ so that for all $i$ sufficiently large $\phi_i\in\Gamma^{S_i}(K)\cap F_i(\varepsilon_i,R_i-c)$.
\end{lemm}
\begin{proof}From Proposition \ref{properties.main} c) and \eqref{full.measure} we can choose $\{\phi_i\}_{i\in\N}\subset \Gamma$ so that for all $i$ sufficiently large $$\phi_i\in \Gamma^{S_i}(\varepsilon_i,R_i)\cap \Gamma^{S_i}(K).$$ 
Thus from the definition of $P_i$ there is $g_i\in\Gamma$ with $|g_i|\leq n$ so that $g_i\phi_i\in F_i(\varepsilon_i,R_i)$. 
Set $c=\max\{d(p,\phi(p)):|\phi|\leq n\}$. Then from \eqref{inclusion.lemm} we have that $\phi_i\in F_i(\varepsilon_i,R_i-c)$ for all $i$ sufficiently large.
\end{proof}

Suppose that $\mathcal L$ has no element with a dense $\Gamma$-orbit in $\mathcal C$. Then Theorem \ref{mmo-lemma} implies that every point in $\mathcal L$ is isolated and so the set $$\{\gamma\in\mathcal L:C(\gamma)\cap \Delta\neq \emptyset\}$$
is finite. Thus, because  every $\gamma\in \mathcal L$ has $C(\gamma)$ projecting to a closed surface in $M$, we can choose a compact set $K\subset \Delta$ so that $C(\gamma)\cap  K=\emptyset$ for all $\gamma\in \mathcal L$. On the other hand, applying Theorem \ref{compactness-thm} to $\phi_i(D_i)
$, where the sequence $\{\phi_i\}_{i\in\N}\subset \Gamma$ is the one given by Lemma \ref{technical.main.lemm}, we obtain $\gamma\in\mathcal L$ for which $C(\gamma)\cap K\neq \emptyset$, which is a contradiction.
 \end{proof}

\section{Proof of Theorem \ref{asymp.area}}
This section is devoted to the proof of Theorem \ref{asymp.area}. {Recall that given a closed Riemannian manifold $(N,g)$, the volume entropy is defined as
$$E_{vol}(g)=\lim_{R\to\infty}\frac{\ln vol(\widehat B_R(x))}{R}= \lim_{R\to\infty}\frac{\ln \#\{\gamma\in \pi_1(N):\hat d(x,\gamma(x))\leq R\}}{R},$$  
where $\widehat B_R(p)$ and $\hat d$ denote, respectively, the geodesic balls and distance function induced by $g$ in the universal cover $\hat N$ of $N$. The fact that the first limit exists was first observed in \cite{manning} and it is standard to check the second identity.

Suppose we have an essential immersion $\Sigma\subset M$ which lifts to a disc $\Omega$ in the universal cover $\hat M$ of $M$. In this case $\pi_1(\Sigma)$ acts naturally by isometries in $\hat M$ and  if $\hat d_{\Omega}$  denotes the intrinsic distance in $\Omega$ we have $\hat d(x,y)\leq \hat d_{\Omega}(x,y)$ for all $x,y\in \Omega$. Thus
\begin{align*}
\#\{\gamma\in \pi_1(\Sigma):\hat d_{\Omega}(x,\gamma(x))\leq R\} \leq & \#\{\gamma\in \pi_1(\Sigma):\hat d(x,\gamma(x))\leq R\}\\
 \leq &\#\{\gamma\in \pi_1(M):\hat d(x,\gamma(x))\leq R\}.
\end{align*}
Hence $E_{vol}(h_{\Sigma})\leq E_{vol}(h)$.  From \cite{BCG} we have $E_{vol}(h_{\Sigma})^2\text{area}_h(\Sigma)\geq 4\pi(g-1)$, where $g$ is the genus of $\Sigma$ and so by minimizing area in the homotopy class $\Pi$ of $\Sigma$ we deduce that
$$\text{area}_{h}(\Pi)\geq  E_{vol}(h)^{-2}4\pi(g-1).$$
Thus,  denoting by $\floor{x}$ the integer part of $x$,
$$\text{area}_h(\Pi)\leq 4\pi(L-1)\implies \Pi\in S(M,\floor{E_{vol}(h)^{2}L}),$$
and so, for all $\varepsilon>0$ and all $L$ sufficiently large, we have from Theorem \ref{counting.thm} that
\begin{multline*}
\ln\#\{\text{area}_h(\Pi)\leq 4\pi(L-1):\Pi\in S_{\varepsilon}(M)\}\leq \ln s(M,\floor{E_{vol}(h)^{2}L},\varepsilon)\\
\leq 2E_{vol}(h)^{2}L\ln(c_2E_{vol}(h)^{2}L),
\end{multline*}
which implies that $E(h)\leq 2E_{vol}(h)^{2}$. Next we compute  $E(\bar h)$.
}

Given $\Pi\in S_{\varepsilon}(M)$, consider the essential minimal surface  $S\in \Pi$ so that $\text{area}(S)=\text{area}(\Pi)$. From Theorem \ref{compactness-thm}  we have $|A|^2_{L^{\infty}(S)}=o_{\varepsilon}(1)$, meaning that if $\varepsilon$ is very small then the quantity on the left side will also be small. Let $g$ be the genus of $S$. The integrated form of Gauss's equation \eqref{gauss.eqn} gives
\begin{equation*}
\text{area}(S)=4\pi(g-1)+o_{\varepsilon}(1)\text{area}(S)
\end{equation*}
and so for all $\varepsilon$ uniformly small we have
\begin{equation}\label{area.minimal}
\text{area}(S)=4\pi(g-1)(1+o_{\varepsilon}(1)).
\end{equation}
One immediate consequence is that, given $\delta>0$,  for all $\varepsilon$ small and all $L$ large  (depending on $\delta$ but independently of $\Pi$) we have both 
\begin{align*}
\text{area}(\Pi)\leq 4\pi(L-1)\text{ and } \Pi\in S_{\varepsilon}(M)& \implies \Pi\in S(M,\floor{(1+\delta)L},\varepsilon),\\
\Pi\in S(M,\floor{(1-\delta)L},\varepsilon)& \implies \text{area}(\Pi)\leq 4\pi(L-1)
\end{align*}
and so, recalling the notation set in Section \ref{black.box},
\begin{multline*}
\ln s(M,\floor{(1-\delta)L},\varepsilon)\leq \ln\#\{\text{area}(\Pi)\leq 4\pi(L-1):\Pi\in S_{\varepsilon}(M)\}\\
\leq \ln s(M,\floor{(1+\delta)L},\varepsilon).
\end{multline*}
Combining with Theorem \ref{counting.thm} we deduce that for all $\varepsilon$ small
$$2(1-\delta)\leq \liminf_{L\to\infty}\frac{\ln \#\{\text{area}(\Pi)\leq 4\pi(L-1):\Pi\in S_{\varepsilon}(M)\}}{L\ln L}\leq 2(1+\delta). $$
The arbitrariness of $\delta$ shows that $E(\bar h)=2$.

Suppose now that the sectional curvature of $h$ is less than or equal to $-1$. From the integrated form of Gauss's equation \eqref{gauss.eqn} we have that  every  genus $g$ minimal surface has $\text{area}_h(\Sigma)\leq 4\pi(g-1)$. Thus $\Pi\in S(M,\floor{L},\varepsilon)$ implies that $\text{area}_h(\Pi)\leq 4\pi(L-1)$. Hence
$$\#\{\text{area}_h(\Pi)\leq 4\pi(L-1):\Pi\in S_{\varepsilon}(M)\}\geq s(M,\floor{L},\varepsilon)$$
and  so Theorem \ref{counting.thm} implies that $E(h)\geq 2=E(\bar h)$.

Suppose now that $E(h)=2$. Consider the set $G(M,g,\varepsilon)\subset S(M,g,\varepsilon)$ given by Theorem \ref{counting.thm}. 
\medskip

\noindent{\bf Claim: }{\em For all $\delta>0$, there is $j\in\N$ so that for all $i\geq j$ we can find $g\in\N$ and  $\Pi\in G(M,g,1/i)$ so that
$$\text{area}_h(\Pi)>4\pi((1+\delta)^{-1}g-1).$$
}
Suppose not. In that case there is an increasing sequence of integers $\{i_j\}_{j\in\N}$ so that for all $g\in\N$ and $\Pi\in G(M,g,i_j^{-1})$ we have
$$ \text{area}_h(\Pi)\leq 4\pi((1+\delta)^{-1}g-1)$$
and hence, for all $L\geq 0$ 
$$\Pi\in G(M,\floor{(1+\delta)L},i_j^{-1})\implies  \text{area}_h(\Pi)\leq 4\pi(L-1).$$
Thus, for all $j\in\N$,
\begin{multline*}
\liminf_{L\to\infty}\frac{\ln \#\{\text{area}_h(\Pi)\leq 4\pi(L-1):\Pi\in S_{i_j^{-1}}(M)\}}{L\ln L}\\
 \geq \liminf_{L\to\infty}\frac{\ln \# G(M,\floor{(1+\delta)L},i_j^{-1})}{L\ln L}\geq 2(1+\delta),
\end{multline*}
which contradicts $E(h)=2$.
\medskip

Therefore we can find  an increasing  sequence of integers  $\{j_i\}_{i\in\N}$ and a sequence $\Pi_i\in G(M,g_i,j_i^{-1})$, $i\in \N$,  so that 
\begin{equation}\label{asym.limit}
\text{area}_h(\Pi_i)\geq 4\pi((1-1/i)g_i-1)\quad\text{for all }i\in\N.
\end{equation}
Denote by $S_i, \Sigma_i$ the minimal surfaces that minimize area in the homotopy class $\Pi_i$ with respect to $h$ and $\bar h$ respectively. We have $\text{area}(S_i)\leq 4\pi(g_i-1)$ and so we deduce from \eqref{asym.limit} that
\begin{equation}\label{asymp.identity}
\liminf_{i\to\infty}\frac{\text{area}_h(\Sigma_i)}{\text{area}(S_i)}\geq\liminf_{i\to\infty}\frac{4\pi((1-1/i)g_i-1)}{4\pi(g_i-1)}=1.
\end{equation}
The rigidity follows from  Theorem \ref{rigidity.thm}. \bibliographystyle{amsbook}

\end{document}